\documentclass[twoside, 11pt]{article}
\usepackage{amssymb, amsmath, mathrsfs, amsthm}
\usepackage{graphicx}
\usepackage{color}
\usepackage[top=2cm, bottom=2cm, left=2cm, right=2cm]{geometry}
\usepackage{float, caption, subcaption}

\DeclareGraphicsRule{*}{eps}{*}{}
\input{epsf}

\newcommand{\bd}{\begin{description}}
\newcommand{\ed}{\end{description}}
\newcommand{\bi}{\begin{itemize}}
\newcommand{\ei}{\end{itemize}}
\newcommand{\be}{\begin{enumerate}}
\newcommand{\ee}{\end{enumerate}}
\newcommand{\beq}{\begin{equation}}
\newcommand{\eeq}{\end{equation}}
\newcommand{\beqs}{\begin{eqnarray*}}
\newcommand{\eeqs}{\end{eqnarray*}}

\definecolor{DarkGreen}{rgb}{0.2, 0.6, 0.3}




\newtheorem{theorem}{Theorem}[section]

\newtheorem{lemma}{Lemma}[section]

\newtheorem{corollary}[theorem]{Corollary}

\newtheorem{claim}{Claim}
\newtheorem{remark}{Remark}[section]
\newtheorem{fact}{Fact}
\newtheorem{proposition}{Proposition}[section]

\newtheorem{observation}{Observation}[section]
\setcounter{case}{0} \setcounter{claim}{0}

\begin{document}
\title{\textbf{On the $g$-good-neighbor connectivity of graphs} \footnote{Supported by the National
Science Foundation of China (Nos. 11601254, 11551001, 11161037,
61763041, 11661068, and 11461054) and the Science Found of Qinghai
Province (Nos.  2016-ZJ-948Q, and 2014-ZJ-907) and the  Qinghai Key
Laboratory of Internet of Things Project (2017-ZJ-Y21).} }

\author{
Zhao Wang\footnote{College of Science, China Jiliang University,
Hangzhou 310018, China. {\tt wangzhao@mail.bnu.edu.cn}}, \ \ Yaping
Mao\footnote{Corresponding author} \footnote{School of Mathematics
and Statistis, Qinghai Normal University, Xining, Qinghai 810008,
China. {\tt maoyaping@ymail.com}}, \ \ Sun-Yuan Hsieh
\footnote{Department of Computer Science and Information
Engineering, National Cheng Kung University, Tainan 701, Taiwan {\tt
hsiehsy@mail.ncku.edu.tw}}, \ \ Jichang Wu \footnote{School of
Mathematics, Shandong University, Jinan, Shandong 250100, China {\tt
jichangwu@yahoo.com.cn}}}
\date{}
\maketitle

\begin{abstract}
Connectivity and diagnosability are two important parameters for the
fault tolerant of an interconnection network $G$. In 1996,
F\`{a}brega and Fiol proposed the $g$-good-neighbor connectivity of
$G$. In this paper, we show that $1\leq \kappa^g(G)\leq n-2g-2$ for
$0\leq g\leq \left\{\Delta(G),\left\lfloor
\frac{n-3}{2}\right\rfloor\right\}$, and graphs with
$\kappa^g(G)=1,2$ and trees with $\kappa^g(T_n)=n-t$ for $4\leq
t\leq \frac{n+2}{2}$ are characterized, respectively. In the end, we
get the three extremal
results for the $g$-good-neighbor connectivity. \\[2mm]
{\bf Keywords:} Connectivity, $g$-good-neighbor connectivity, extremal problem.\\[2mm]
{\bf AMS subject classification 2010:} 05C40; 05C05; 05C76.
\end{abstract}

\section{Introduction}

With the rapid development of VLSI technology, a multiprocessor
system may contain hundreds or even thousands of nodes, and some of
them may be faulty when the system is implemented. As the number of
processors in a system increases, the possibility that its
processors may be comefaulty also increases. Because designing such
systems without defects is nearly impossible, reliability and fault
tolerance are two of the most critical concerns of multiprocessor
systems \cite{XuWangWang}.

By the definition proposed by Esfahanian \cite{Esfahanian}, a
multiprocessor system is fault tolerant if it can remain functional
in the presence of failures. Two basic functionality criteria have
received considerable attention. The first criterion for a system to
be regarded as functional is whether the network logically contains
a certain topological structure. This is the problem that occurs
when embedding one architecture into another \cite{Leighton, Xu}.
This approach involves using system-wide redundancy and
reconfiguration. The second functionality criterion considers a
multiprocessor system functional if a fault-free communication path
exists between any two fault-free nodes; that is, the topological
structure of the multiprocessor system remains connected in the
presence of certain failures. Thus, connectivity and edge
connectivity are two major measurements of this criterion \cite{Xu}.
The \emph{connectivity} of a graph $G$, denoted by $\kappa(G)$, is
the minimal number of vertices whose removal from produces a
disconnected graph or only one vertex; the \emph{edge connectivity}
of a graph $G$, denoted by $\lambda(G)$, is the minimal number of
edges whose removal from produces a disconnected graph. However,
these two parameters tacitly assume that all vertices that are
adjacent to, or all edges that are incident to, the same vertex can
potentially fail simultaneously. This is practically impossible in
some network applications.

For a graph $G$, let $V(G)$, $E(G)$, $e(G)$, $\overline{G}$, and
$diam(G)$ denote the set of vertices, the set of edges, the size,
the complement, and the diameter of $G$, respectively. A subgraph
$H$ of $G$ is a graph with $V(H)\subseteq V(G)$, $E(H)\subseteq
E(G)$, and the endpoints of every edge in $E(H)$ belonging to
$V(H)$. For any subset $X$ of $V(G)$, let $G[X]$ denote the subgraph
induced by $X$; similarly, for any subset $F$ of $E(G)$, let $G[F]$
denote the subgraph induced by $F$. We use $G-X$ to denote the
subgraph of $G$ obtained by removing all the vertices of $X$
together with the edges incident with them from $G$; similarly, we
use $G-F$ to denote the subgraph of $G$ obtained by removing all the
edges of $F$ from $G$. If $X=\{v\}$ and $F=\{e\}$, we simply write
$G-v$ and $G-e$ for $G-\{v\}$ and $G-\{e\}$, respectively. For two
subsets $X$ and $Y$ of $V(G)$ we denote by $E_G[X,Y]$ the set of
edges of $G$ with one end in $X$ and the other end in $Y$. If
$X=\{x\}$, we simply write $E_G[x,Y]$ for $E_G[\{x\},Y]$. The {\it
degree}\index{degree} of a vertex $v$ in a graph $G$, denoted by
$deg_G(v)$, is the number of edges of $G$ incident with $v$. Let
$\delta(G)$ and $\Delta(G)$ be the minimum degree and maximum degree
of the vertices of $G$, respectively. The set of neighbors of a
vertex $v$ in a graph $G$ is denoted by $N_G(v)$. The {\it union}
$G\cup H$ of two graphs $G$ and $H$ is the graph with vertex set
$V(G)\cup V(H)$ and edge set $E(G)\cup E(H)$. If $G$ is the disjoint
union of $k$ copies of a graph $H$, we simply write $G=kH$.

\subsection{The $g$-extra (edge-)connectivity}

F\`{a}brega and Fiol \cite{FabregaFiol1, FabregaFiol2} introduced
the following measures. A subset of vertices $S$ is said to be a
\emph{cutset} if $G-S$ is not connected. A cutset $S$ is called an
\emph{$R_g$-cutset}, where $g$ is a non-negative integer, if every
component of $G-S$ has at least $g+1$ vertices. If $G$ has at least
one $R_g$-cutset, the \emph{$g$-extra connectivity} of $G$, denoted
by $\kappa_g(G)$, is then defined as the minimum cardinality over
all $R_g$-cutsets of $G$. A connected graph $G$ is said to be
\emph{$g$-extra connected} if $G$ has a $g$-extra cut.

Given a graph and a non-negative integer $g$, the \emph{$g$-extra
edge-connectivity}, written as $\lambda_g(G)$, is the minimal
cardinality of a set of edges in $G$, if it exists, whose deletion
disconnects $G$ and leaves each remaining component with more than
$g$ vertices.

Note that $\kappa_0(G)=\kappa(G)$ and $\lambda_0(G)=\lambda(G)$ for
any graph if is not a complete graph. Therefore, the $g$-extra
connectivity (resp. $g$-extra edge connectivity) can be regarded as
a generalization of the classical connectivity (resp. classical edge
connectivity) that provides measures that are more accurate for
reliability and fault tolerance for large-scale parallel processing
systems. Regarding the computational complexity of the problem,
based on thorough research, no polynomial-time algorithm has been
presented to compute $\kappa_g$ for a general graph; nor has there
been any tight upper bound for $\kappa_g$ \cite{Esfahanian}.
However, $\lambda_1$ can be computed by solving numerous network
flow problems \cite{EsfahanianHakimi}.

\subsection{The $g$-good-neighbor connectivity}

Let $G=(V,E)$ be connected. A fault set $F\subseteq V$ is called a
\emph{$g$-good-neighbor faulty set} if $|N(v)\cap (V-F)|\geq g$ for
every vertex $v$ in $V-F$. A \emph{$g$-good-neighbor cut} of $G$ is
a $g$-good-neighbor faulty set $F$ such that $G-F$ is disconnected.
The minimum cardinality of $g$-good-neighbor cuts is said to be the
\emph{$g$-good-neighbor connectivity} of $G$, denoted by
$\kappa^{g}(G)$. A connected graph $G$ is said to be
\emph{$g$-good-neighbor connected} if $G$ has a $g$-good-neighbor
cut. For more research on $g$-good-neighbor connectivity, we refer
to \cite{LiLu, LinXuWangZhou, PengLinTanHsu, RenWang, WangWang,
WangWangWang, WeiXu, WeiXu2, XuLiZhouHaoGu, YuanLiuQinZhangLi,
YuanLiuMaLiuQinZhang}.

The relation of $g$-extra connectivity and $g$-good-neighbor
connectivity was given in \cite{PreparataMetzeChien}.
\begin{theorem}{\upshape \cite{PreparataMetzeChien}}\label{def1-1}
$(1)$ If $G$ is a $1$-good-neighbor connected graph, then
$\kappa_{1}(G)=\kappa^{1}(G)$.

$(2)$ If $G$ is a $g$-extra and $g$-good-neighbor connected graph,
then $\kappa_{g}(G)\leq \kappa^{g}(G)$.
\end{theorem}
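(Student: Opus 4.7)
My plan is to exploit the fact that the two cut definitions force structural properties on the components of $G - F$ that are nested in one direction. Specifically, I will show (a) every $g$-good-neighbor cut is an $R_g$-cutset (which proves part (2)), and (b) when $g = 1$ every $R_1$-cutset is also a $1$-good-neighbor cut (which, combined with (a), yields part (1)).

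For part (2), I take any $g$-good-neighbor cut $F$ of $G$. The defining condition $|N(v)\cap (V-F)|\geq g$ for every $v\in V-F$ says exactly that the induced subgraph $G-F$ has minimum degree at least $g$. Since any connected graph with minimum degree at least $g$ has at least $g+1$ vertices, every component of $G-F$ contains at least $g+1$ vertices; together with the fact that $G-F$ is disconnected, this means $F$ is an $R_g$-cutset. Choosing $F$ to be a minimum $g$-good-neighbor cut gives $\kappa_g(G)\leq |F|=\kappa^g(G)$.

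For part (1), after applying the $g=1$ case of (2) it remains to prove $\kappa^1(G)\leq \kappa_1(G)$. Let $S$ be any $R_1$-cutset of $G$. Each component of $G-S$ is connected and has at least two vertices, so every $v\in V-S$ has at least one neighbor in its component and hence $|N(v)\cap (V-S)|\geq 1$. Therefore $S$ is a $1$-good-neighbor cut, so minimizing over $S$ yields $\kappa^1(G)\leq \kappa_1(G)$, and equality follows.

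The proof is essentially bookkeeping with the definitions; there is no real obstacle. The only subtlety worth flagging is that the converse to (a) fails for $g\geq 2$: a component can have $g+1$ or more vertices yet contain a low-degree vertex (for instance, a path on $g+1$ vertices has endpoints of degree $1$), so an $R_g$-cutset need not be a $g$-good-neighbor cut. This is precisely why the two parameters can differ for general $g$ and why the equality in part (1) is restricted to $g=1$.
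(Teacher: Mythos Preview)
Your proof is correct. The paper does not actually supply its own proof of this theorem; it merely cites the result from \cite{PreparataMetzeChien}, so there is nothing in the paper to compare against beyond noting that your argument is the standard one: every $g$-good-neighbor cut is an $R_g$-cutset because minimum degree $\geq g$ forces $\geq g+1$ vertices per component, and for $g=1$ the converse holds since a connected component on at least two vertices has no isolated vertices.
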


The range of the integer $g$ can be determined immediately.
\begin{proposition}\label{pro1-1}
Let $g$ be a non-negative integer. If $G$ has its $g$-good-neighbor
connectivity, then
$$
0\leq g\leq \left\{\Delta(G),\left\lfloor \frac{n-3}{2}\right\rfloor\right\}
$$
and
$$
e(G)\leq {n\choose 2}-(g+1)^2.
$$
\end{proposition}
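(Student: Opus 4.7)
The plan is to extract all three bounds from a single structural observation about any $g$-good-neighbor cut $F$ of $G$: every component of $G - F$ contains at least $g+1$ vertices. Indeed, if $C$ is such a component and $v \in V(C)$, then every neighbor of $v$ lying in $V-F$ must belong to $C$, so
$$|V(C)| \;\ge\; |N(v) \cap (V - F)| + 1 \;\ge\; g+1.$$
This is really the only content in the argument; the rest is bookkeeping.

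Granting this, the three bounds follow in short order. For $g \le \Delta(G)$, pick any $v \in V-F$ (which is non-empty since $G-F$ is disconnected); then $\deg_G(v) \ge |N(v) \cap (V-F)| \ge g$, so $\Delta(G) \ge g$. For $g \le \lfloor (n-3)/2 \rfloor$, take two components of $G-F$ (they exist because $F$ is a cut); summing their sizes gives $n - |F| \ge 2(g+1)$, and since $G$ is assumed connected we have $|F| \ge 1$, so $n \ge 2g+3$, whence $g \le \lfloor (n-3)/2 \rfloor$. For the edge bound, let $C_1, C_2$ be two components of $G-F$; since no edge of $G$ joins $C_1$ to $C_2$, we obtain
$$\binom{n}{2} - e(G) \;\ge\; |C_1|\cdot|C_2| \;\ge\; (g+1)^2,$$
which rearranges to $e(G) \le \binom{n}{2} - (g+1)^2$. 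The non-negativity $g \ge 0$ is built into the definition of a $g$-good-neighbor faulty set.

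There is no real obstacle: the only thing worth double-checking is that the definition truly forces every component (not just one distinguished ``small'' side of the cut) to have size at least $g+1$, and that a cut in a connected graph has cardinality at least one so that the inequality $n - |F| \ge 2(g+1)$ translates into a bound purely in terms of $n$ and $g$.
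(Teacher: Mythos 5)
Your proof is correct and follows essentially the same route as the paper: both arguments hinge on the observation that minimum degree at least $g$ in each component of $G-F$ forces every component to have at least $g+1$ vertices, then deduce $g\leq \Delta(G)$ from a vertex of the cut's complement, $g\leq \lfloor (n-3)/2\rfloor$ from summing two component sizes against $n-|F|\leq n-1$, and the edge bound from the missing edges between two components. No substantive difference from the paper's proof.
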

\begin{proof}
From the definition of $g$-good-neighbor connectivity, there exists
$X\subseteq V(G)$ with $|X|=\kappa_g(G)$ such that $G-X$ is not
connected and the minimum degree of each component of $G-X$ is at
least $g$. Let $C_1,C_2,\ldots,C_r$ be the components of $G-X$. Then
$r\geq 2$ and
$$
2(g+1)\leq |V(C_1)|+|V(C_2)|\leq \sum_{i=1}^r|V(C_i)|=|V(G)|-|X|\leq
n-1,
$$
and hence $0\leq g\leq \lfloor \frac{n-3}{2}\rfloor$. For each $C_i
\ (1\leq i\leq r)$, we have $\Delta(G)\geq \Delta(C_i)\geq
\delta(C_i)\geq g$.

Furthermore, $e(\overline{G})\geq |V(C_1)||V(C_2)|=(g+1)^2$, and
hence $e(G)\leq {n\choose 2}-(g+1)^2$.
\end{proof}

The monotone property of $\kappa_g(G)$ for non-negative integer $g$
is true.
\begin{proposition}\label{pro1-2}
Let $g$ be a non-negative integer, and let $G$ be a connected graph.
Then
$$
\kappa^{g}(G)\leq \kappa^{g+1}(G).
$$
\end{proposition}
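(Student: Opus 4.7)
The plan is to show that every $(g{+}1)$-good-neighbor cut is automatically a $g$-good-neighbor cut, so the minimum over the former set is at least the minimum over the latter. First I would invoke the hypothesis implicit in the statement that $\kappa^{g+1}(G)$ is defined, i.e.\ $G$ admits some $(g{+}1)$-good-neighbor cut $F$ with $|F| = \kappa^{g+1}(G)$. By definition, $G - F$ is disconnected and every vertex $v \in V(G) - F$ satisfies $|N(v) \cap (V(G) - F)| \geq g+1$.

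The key step is the trivial monotonicity of the neighbor condition: since $g + 1 \geq g$, the same inequality gives $|N(v) \cap (V(G) - F)| \geq g$ for every $v \in V(G) - F$. Combined with the fact that $G - F$ is disconnected, this says $F$ is a $g$-good-neighbor cut of $G$. In particular, $\kappa^{g}(G)$ is defined and satisfies $\kappa^{g}(G) \leq |F| = \kappa^{g+1}(G)$, which is the desired inequality.

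I do not anticipate any real obstacle here; the argument is essentially a one-line inclusion between the families of admissible faulty sets. The only conceptual point worth noting is that the conclusion should be read as: if the right-hand side $\kappa^{g+1}(G)$ exists, then so does the left-hand side, and the ordering holds; no extra existence hypothesis on $\kappa^g(G)$ is needed beyond that guaranteed by Proposition \ref{pro1-1}.
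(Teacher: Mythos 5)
Your argument is correct and is essentially the paper's own proof: both take a minimum $(g{+}1)$-good-neighbor cut $F$ and observe that the neighbor condition $|N(v)\cap (V-F)|\geq g+1>g$ makes $F$ a $g$-good-neighbor cut as well, giving $\kappa^{g}(G)\leq |F|=\kappa^{g+1}(G)$. Your extra remark about the existence of $\kappa^{g}(G)$ being inherited from that of $\kappa^{g+1}(G)$ is a small but welcome clarification that the paper leaves implicit.
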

\begin{proof}
From the definition of $(g+1)$-good-neighbor connectivity, by
deleting $\kappa^{g+1}(G)$ vertices in $G$, the resulting graph is
not connected and the minimum degree of each component is at least
$g+1>g$, and hence $\kappa^{g}(G)\leq \kappa^{g+1}(G)$.
\end{proof}

The monotone property of $\kappa^0(G)$ is true in terms of connected
graphs $G$.
\begin{observation}\label{obs1-2}
Let $G$ be a connected graph. If $H$ is a spanning subgraph of $G$,
then $\kappa^0(H)\leq \kappa^0(G)$.
\end{observation}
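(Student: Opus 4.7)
The plan is to exploit the fact that the $0$-good-neighbor condition $|N(v)\cap (V-F)|\geq 0$ is vacuously satisfied by every subset $F\subseteq V(G)$. In particular, any vertex subset is automatically a $0$-good-neighbor faulty set, and so $\kappa^0$ collapses to essentially the ordinary vertex connectivity (modulo the degenerate case of complete graphs). Once this reduction is made, the claim is the standard monotonicity of vertex connectivity under edge deletion, which I would prove directly by exhibiting a cut.

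To implement this, I would take a minimum $0$-good-neighbor cut $F^*$ of $G$, so that $|F^*|=\kappa^0(G)$ and $G-F^*$ decomposes into components $C_1,\ldots,C_k$ with $k\geq 2$. Since $H$ is a spanning subgraph of $G$, we have $V(H)=V(G)$ and $E(H)\subseteq E(G)$. No edge of $G$ joins vertices in distinct $V(C_i)$'s, and therefore no edge of $H$ does either. Hence the same partition $V(C_1),\ldots,V(C_k)$ witnesses the disconnectedness of $H-F^*$. The $0$-good-neighbor condition is automatic in $H$ as well, so $F^*$ qualifies as a $0$-good-neighbor cut of $H$, giving $\kappa^0(H)\leq |F^*|=\kappa^0(G)$.

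There is no real obstacle; the argument is essentially the observation that deleting edges can only help a disconnecting vertex set, never destroy it. The only mild subtlety is the edge case where $H$ is itself disconnected, in which situation $\emptyset$ is already a $0$-good-neighbor cut of $H$ and so $\kappa^0(H)=0\leq \kappa^0(G)$ holds a fortiori. No deeper structural properties of $G$ or $H$ are needed beyond $V(H)=V(G)$ and $E(H)\subseteq E(G)$.
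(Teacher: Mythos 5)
Your proof is correct: since the $0$-good-neighbor condition is vacuous, any minimum cut $F^*$ of $G$ remains a cut of the spanning subgraph $H$, which is exactly the (routine) justification the paper leaves implicit by stating this as an observation without proof. Your handling of the degenerate case where $H$ is disconnected is a reasonable extra precaution and does not conflict with anything in the paper.
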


But for $g\geq 1$, the above monotone property is not true.

\begin{remark}\label{rem1-1}
Let $G$ be a graph obtained from four cliques $X_1,X_2,Y_1,Y_2$ with
$\delta(X_i)\geq g \ (i=1,2)$ and $\delta(Y_j)\geq g \ (j=1,2)$ and
three vertices $u,v,w$ by adding edges in $E_G[u,X_1]\cup
E_G[u,X_2]\cup E_G[u,Y_1]\cup E_G[u,Y_2]\cup E_G[v,Y_1]\cup
E_G[v,Y_2]\cup E_G[w,Y_1]\cup E_G[w,Y_2]\cup \{uv,uw\}$. Let $H$ be
a graph obtained from two cliques $Y_1,Y_2$ defined in $G$, two
subgraph $Z_1,Z_2$ such that $\delta(Z_i)=g-1 \ (i=1,2)$, and three
vertices $u,v,w$ by adding edges in $E_G[u,Z_1]\cup E_G[u,Z_2]\cup
E_G[u,Y_2]\cup E_G[v,Y_1]\cup E_G[v,Y_2]\cup E_G[w,Y_1]\cup
E_G[w,Y_2]$. Clearly, $H$ is a spanning subgraph of $G$; see Figure
$1$. We first show that $\kappa^g(G)=1$. By deleting the vertex $u$,
there are three components and the minimum degree of each component
is at least $g$, and hence $\kappa^g(G)\leq 1$, and so
$\kappa^g(G)=1$. Next, we show that $\kappa^g(H)=2$. Suppose
$\kappa^g(H)=1$. Then there exists a vertex such that by deleting
this vertex the resulting graph is not connected and the minimum
degree of each component is at least $g$. Note that $u$ is the
unique cut vertex of $H$. Clearly, there is a component having
minimum degree $g-1$, a contradiction. So $\kappa^g(H)\geq 2$. By
deleting the vertices $v,w$, there are two components and the
minimum degree of each of them is at least $g$, and hence
$\kappa^g(H)\leq 2$. So $\kappa^g(H)=2>\kappa^g(G)$.
\end{remark}
\begin{figure}[!hbpt]
\begin{center}
\includegraphics[scale=0.7]{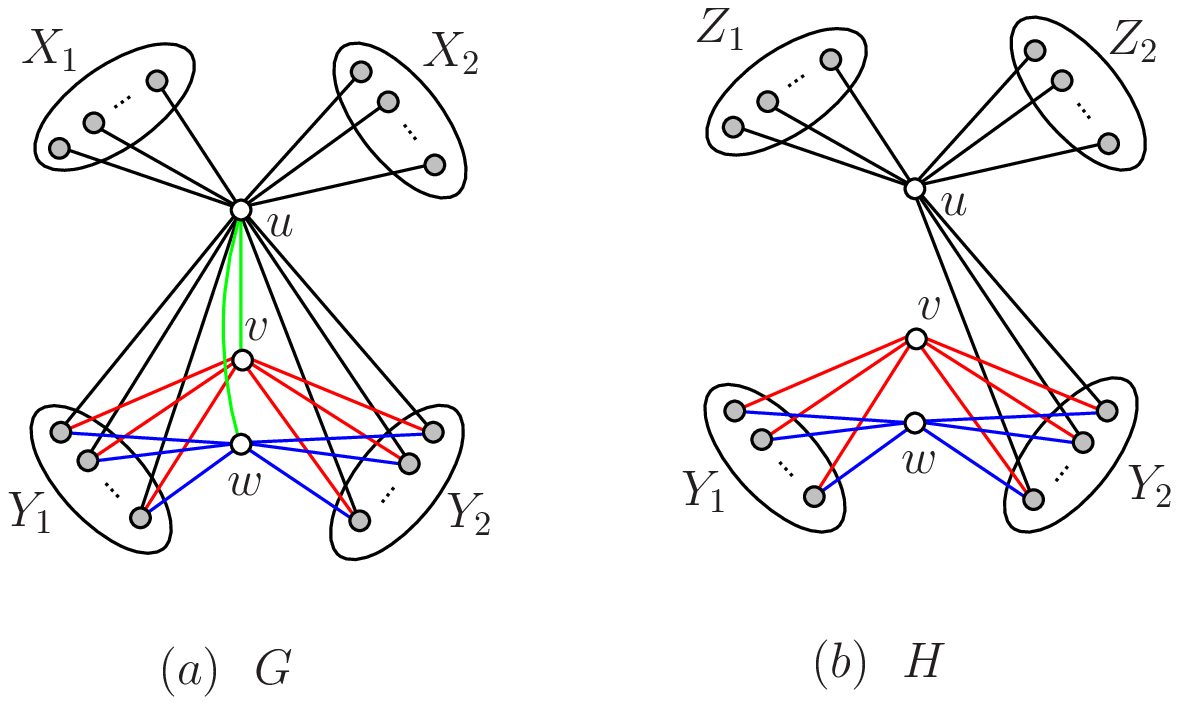}
\end{center}
\begin{center}
Figure 1: Graphs for Remark 1.1.
\end{center}\label{fig1}
\end{figure}

\subsection{Some classical problems}

One of the interesting problems in extremal graph theory is the
Erd\"{o}s-Gallai-type problem, which is to determine the maximum or
minimum value of a graph parameter with some given properties. In
\cite{CaiLiWu, JiangLiZhang}, the authors investigated two kinds of
Erd\"{o}s-Gallai-type problems for monochromatic connection number
and monochromatic vertex connection number, respectively. Motivated
by these, we study two kinds of Erd\"{o}s-Gallai-type problems for
$\kappa^g(G)$ in this paper.

\noindent {\bf Problem 1.} Given two positive integers $n$ and $k$,
compute the minimum integer $f(n,k)$ such that for every connected
graph $G$ of order $n$, if $|E(G)|\geq f(n,k)$ then $\kappa^g(G)\geq
k$.

\noindent {\bf Problem 2.} Given two positive integers $n$ and $k$,
compute the maximum integer $g(n,k)$ such that for every graph $G$
of order $n$, if $|E(G)|\leq g(n,k)$ then $\kappa^g(G)\leq k$.

Another interesting problem in extremal graph theory is to study the
minimum size of graphs with given parameter; see \cite{Schiermeyer}.

\noindent {\bf Problem 3.} Given two positive integers $n$ and $k$,
compute the minimum integer $s(n,k)=\min\{|E(G)|:G\in
\mathscr{G}(n,k)\}$, where $\mathscr{G}(n,k)$ the set of all graphs
of order $n$ (that is, with $n$ vertices) with $g$-good-neighbor
connectivity $k$.

In Section $2$, we obtain the exact values of $g$-extra
connectivities of complete bipartite graphs, complete multipartite
graphs, wheels and paths. For a connected graph $G$ of order $n$, we
show that $\kappa(G)\leq \kappa^g(G)\leq n-2g-2$ for $0\leq g\leq
\min\{\Delta(G),\left\lfloor \frac{n-3}{2}\right\rfloor\}$, and
$1\leq \kappa^g(G)\leq n-2g-2$ for $0\leq g\leq \left\lfloor
\frac{n-3}{2}\right\rfloor$ in Section $2$. Graphs with
$\kappa_g(G)=1,2$ and trees with $\kappa^g(T_n)=n-t$ are
characterized, respectively, in Section $3$. In the end, we get the
extremal results for the $g$-good neighbor connectivity in Section
$4$.

\section{Results for special graphs}

The following upper and lower bounds are immediate.
\begin{proposition}\label{pro3-1}
Let $G$ be a connected graph of order $n$, and let $g$ be a
non-negative integer such that $0\leq g\leq
\min\{\Delta(G),\left\lfloor \frac{n-3}{2}\right\rfloor\}$. Then
$$
\kappa(G)\leq \kappa^g(G)\leq n-2g-2.
$$
Moreover, the upper and lower bounds are sharp.
\end{proposition}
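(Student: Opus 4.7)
The plan is to handle the two bounds separately and then exhibit a single family of graphs witnessing both simultaneously.

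For the lower bound, I would argue that every $g$-good-neighbor cut is, in particular, an ordinary vertex cut. Indeed, if $F \subseteq V(G)$ is a $g$-good-neighbor cut, then by definition $G - F$ is disconnected, so it has at least two components, and hence $F$ is a (classical) vertex cut of $G$. By the definition of $\kappa(G)$ as the minimum size of a vertex cut, this immediately gives $\kappa(G) \le |F|$ and, taking $F$ of minimum size, $\kappa(G) \le \kappa^g(G)$.

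For the upper bound, I would recycle the counting already done in the proof of Proposition \ref{pro1-1}. Let $F$ be a minimum $g$-good-neighbor cut, so $|F| = \kappa^g(G)$, and let $C_1,\dots,C_r$ ($r \ge 2$) be the components of $G - F$. Since $F$ is $g$-good-neighbor, every vertex of $G - F$ has at least $g$ neighbors inside its own component, so $\delta(C_i) \ge g$ for each $i$, and therefore $|V(C_i)| \ge g+1$. Summing yields
$$
n \;=\; |F| + \sum_{i=1}^{r} |V(C_i)| \;\ge\; \kappa^g(G) + 2(g+1),
$$
which rearranges to $\kappa^g(G) \le n - 2g - 2$. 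The hypothesis $g \le \lfloor (n-3)/2\rfloor$ ensures $n - 2g - 2 \ge 1$, so this bound is meaningful.

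For sharpness, I would check that both bounds are attained simultaneously by the join $G = K_{k} \vee 2K_{g+1}$ with $k = n - 2g - 2$. Deleting the $k$ vertices of the $K_k$ part leaves two disjoint copies of $K_{g+1}$, each with minimum degree exactly $g$, so this is a $g$-good-neighbor cut and $\kappa^g(G) \le k = n - 2g - 2$. On the other hand, the $K_k$ part is clearly a minimum vertex cut, giving $\kappa(G) = k$, and combining with the lower bound of step one forces $\kappa(G) = \kappa^g(G) = n - 2g - 2$. This single family simultaneously witnesses tightness of both inequalities. The only real subtlety is confirming that $K_k$ is nonempty (i.e., $k \ge 1$) in the allowed parameter range, which follows from $g \le \lfloor (n-3)/2\rfloor$; the rest is bookkeeping and I do not anticipate any genuine obstacle.
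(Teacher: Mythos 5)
Your proof is correct, and the two inequalities are established exactly as in the paper: the lower bound because every $g$-good-neighbor cut is in particular a vertex cut, and the upper bound by the counting $n=|F|+\sum_i|V(C_i)|\ge \kappa^g(G)+2(g+1)$, which is just a direct rewriting of the paper's contradiction argument (delete $\kappa^g(G)\ge n-2g-1$ vertices and one of the at least two remaining components has at most $g$ vertices). Where you genuinely depart from the paper is in the sharpness discussion: the paper handles the lower bound only via the trivial case $g=0$ (where $\kappa^0(G)=\kappa(G)$) and the upper bound by a forward reference to Theorem \ref{th4-3}, whose tree construction $T_n^*$ with $t=4$ attains $n-2g-2$ only for $g=1$; your single join family $K_{n-2g-2}\vee 2K_{g+1}$ attains both bounds simultaneously for every admissible $g$, and your verification that $V(K_{n-2g-2})$ is both a minimum vertex cut and a $g$-good-neighbor cut, with $n-2g-2\ge 1$ guaranteed by $g\le\lfloor(n-3)/2\rfloor$, is sound. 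So your sharpness argument is self-contained and in fact stronger than what the paper records. One caveat you share with the paper rather than improve on: both proofs implicitly assume that $G$ possesses a $g$-good-neighbor cut at all (so that $\kappa^g(G)$ is defined), which the stated hypothesis $g\le\min\{\Delta(G),\lfloor(n-3)/2\rfloor\}$ does not by itself guarantee (e.g., $K_{3,3}$ has no $1$-good-neighbor cut by Proposition \ref{pro2-2}); this is a defect of the proposition's formulation, not a gap specific to your argument.
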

\begin{proof}
From the definition of $\kappa^g(G)$, we have $\kappa^g(G)\geq
\kappa(G)$. Suppose $\kappa^g(G)\geq n-2g-1$. From the definition,
we can delete $\kappa^g(G)$ vertices in $G$ such that there are at
least two components and one of them has no more than $g$ vertices,
a contradiction. So $\kappa(G)\leq \kappa^g(G)\leq n-2g-2$. Theorem
\ref{th4-3} shows that the upper bound is sharp. If $k=0$, then
$\kappa(G)=\kappa^0(G)$. This implies that the lower bound is sharp.
\end{proof}

The following corollary is immediate from Proposition \ref{pro3-1}.
\begin{corollary}\label{cor3-1}
Let $n,g$ be two integers with $0\leq g\leq \left\lfloor
\frac{n-3}{2}\right\rfloor$. If $G$ is a connected graph of order
$n$, then
$$
1\leq \kappa^g(G)\leq n-2g-2.
$$
Moreover, the upper and lower bounds are sharp.
\end{corollary}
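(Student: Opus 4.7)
The plan is to obtain this corollary as a direct specialization of Proposition \ref{pro3-1}. The only discrepancy between the hypotheses is that Proposition \ref{pro3-1} assumes $g\le \min\{\Delta(G),\lfloor (n-3)/2\rfloor\}$, whereas the corollary drops the constraint $g\le \Delta(G)$. This gap, however, is illusory: the statement implicitly requires $\kappa^g(G)$ to be defined, and Proposition \ref{pro1-1} already tells us that whenever it is, $g\le \Delta(G)$ must hold. So the hypothesis of Proposition \ref{pro3-1} is in fact met, and no real weakening is taking place.

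Given this, I would invoke Proposition \ref{pro3-1} to obtain $\kappa(G)\le \kappa^g(G)\le n-2g-2$ directly. Since the bound $g\le \lfloor (n-3)/2 \rfloor$ forces $n\ge 2g+3\ge 3$, and $G$ is connected, the classical connectivity satisfies $\kappa(G)\ge 1$. Chaining these inequalities produces the stated bound $1\le \kappa^g(G)\le n-2g-2$.

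For the sharpness claims, I would reuse the witnesses already available. The upper bound is attained by the family realizing it in Theorem \ref{th4-3}, exactly as cited in the proof of Proposition \ref{pro3-1}. For the lower bound I would exhibit a small explicit example: take two disjoint copies of $K_{g+1}$ together with an extra vertex $v$ adjacent to every vertex of both cliques. This graph is connected on $2g+3$ vertices, $v$ is a cut vertex, and after deleting $v$ each of the two components is a $K_{g+1}$ whose minimum degree is $g$; hence $\{v\}$ is a $g$-good-neighbor cut and $\kappa^g(G)=1$. For $g=0$ this reduces to observing that $\kappa^0(P_n)=1$ for any path.

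I do not anticipate any substantive obstacle. The only piece of content beyond quoting Proposition \ref{pro3-1} is the hypothesis-translation step via Proposition \ref{pro1-1}, and the sharpness constructions are either already provided by the paper or are immediate from the definition of a $g$-good-neighbor cut.
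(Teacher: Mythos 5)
Your proposal is correct and follows essentially the same route as the paper, which simply derives the corollary from Proposition \ref{pro3-1} together with $\kappa(G)\geq 1$ for connected graphs; your explicit handling of the dropped hypothesis $g\leq\Delta(G)$ via Proposition \ref{pro1-1} is a reasonable clarification of what the paper leaves implicit. Your lower-bound witness (two copies of $K_{g+1}$ joined through one additional vertex) is in fact slightly stronger than the paper's, which only notes sharpness for $g=0$.
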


In the following, we obtain the exact values for $g$-good neighbor
connectivity of some special graphs.
\begin{proposition}\label{pro2-2}
Let $g$ be a non-negative integer.

$(1)$ If $K_{a,b} \ (a\geq b\geq 2)$ is a complete bipartite graph,
then $g=0$ and $\kappa^0(K_{a,b})=b$ and $\kappa^g(K_{a,b})$ does
not exist for $g\geq 1$.

$(2)$ Let $r$ be an integer with $r\geq 3$. For complete
multipartite graph $K_{n_1,n_2,\ldots,n_r}$ $(n_1\leq n_2\leq \ldots
\leq n_r)$, we have $g=0$ and
$$
\kappa^0(K_{n_1,n_2,\ldots,n_r})=\sum_{i=1}^{r-1}n_i
$$
and $\kappa^g(K_{n_1,n_2,\ldots,n_r})$ does not exist for $g\geq 1$.
\end{proposition}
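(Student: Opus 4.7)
The plan is to reduce both parts to a single structural fact about complete multipartite graphs and then read the two statements off from it. First I would prove the following: for any complete multipartite graph $H = K_{n_1,\ldots,n_r}$ (including the bipartite case $r=2$) with parts $V_1,\ldots,V_r$, and any $F \subseteq V(H)$, the subgraph $H - F$ is disconnected if and only if $V(H)\setminus F$ is entirely contained in a single part $V_i$ and $|V_i \setminus F| \geq 2$. The ``if'' direction is immediate since vertices within one part are pairwise non-adjacent. For ``only if'', if two distinct parts $V_i$ and $V_j$ each retain a surviving vertex, then any two surviving vertices in different parts are adjacent in $H - F$, and any two surviving vertices in the same part $V_k$ are joined by a path of length two through any surviving vertex in a different part (which exists by assumption); hence $H - F$ is connected.

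Given this characterization, the non-existence of $\kappa^g$ for $g \geq 1$ in both (1) and (2) is immediate. Indeed, any cut $F$ leaves $V(H)\setminus F$ inside a single part $V_i$, and then every surviving vertex $v \in V_i \setminus F$ has all of its neighbours in the other parts, which are entirely contained in $F$. Hence $|N(v) \cap (V(H)\setminus F)| = 0$ for every $v \in V(H)\setminus F$, violating the $g$-good-neighbour requirement for any $g \geq 1$.

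Finally, for $g = 0$ the good-neighbour condition is vacuous, so I would minimise $|F|$ subject to the constraints $V(H)\setminus F \subseteq V_i$ and $|V_i \setminus F|\geq 2$. The minimum is achieved by retaining the largest part intact: the explicit cut $F = V(H) \setminus V_r$ has size $n - n_r = \sum_{i=1}^{r-1} n_i$ in part (2), and $F = B$ (the smaller side) has size $b$ in part (1), giving the claimed upper bounds. The matching lower bounds come directly from the structural fact: removing fewer than $n - n_r$ (respectively, $b$) vertices necessarily leaves survivors in at least two parts, so $H - F$ stays connected. There is no serious obstacle here; the only subtlety is to keep the condition $|V_i \setminus F| \geq 2$ in sight, so that ``disconnected'' genuinely means at least two components rather than a single residual vertex.
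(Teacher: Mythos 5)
Your proposal is correct and takes essentially the same route as the paper: the structural fact you isolate (a complete multipartite graph minus $F$ is disconnected precisely when the survivors lie in a single part with at least two of them) is exactly what the paper asserts informally when it says one must delete all the vertices of one part, and from it you draw the same conclusions, namely that every cut leaves only isolated vertices (forcing $g=0$ and non-existence for $g\geq 1$) and that the minimum cut is $b$, respectively $\sum_{i=1}^{r-1}n_i$. You simply spell out the connectivity argument, the pigeonhole lower bound, and the $g\geq 1$ non-existence in detail where the paper leaves them implicit.
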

\begin{proof}
$(1)$ By deleting any vertex in $K_{a,b}$, the resulting graph is
still a complete bipartite graph and it is connected. If we require
the resulting graph is not connected, then we must delete all the
vertices of one part. Then $g=0$. Since $a\geq b\geq 2$, we have
$\kappa^g(K_{a,b})=b$.

$(2)$ Similarly to the proof of $(1)$, we can get
$\kappa^g(K_{n_1,n_2,\ldots,n_r})=\sum_{i=1}^{r-1}n_i$.
\end{proof}

\begin{proposition}\label{pro2-3}
Let $g$ be a non-negative integer.

$(1)$ If $W_{n} \ (n\geq 7)$ is a wheel of order $n$, then
$\kappa^g(W_{n})=3$ for $g=0,1$.

$(2)$ If $P_{n} \ (n\geq 3)$ be a path of order $n$, then
$\kappa^g(P_{n})=1$ for $g=0,1$.
\end{proposition}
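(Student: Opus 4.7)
My approach is to establish the lower bounds via Proposition~\ref{pro3-1} together with direct computations of $\kappa(W_n)$ and $\kappa(P_n)$, and then to exhibit explicit $g$-good-neighbor cuts realizing those bounds for $g=0$ and $g=1$.

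For part (1), I would write the wheel $W_n$ as a hub $u$ joined to every vertex of a cycle $v_1 v_2 \cdots v_{n-1} v_1$. Each cycle vertex has degree $3$, so $\delta(W_n)=3$ and hence $\kappa(W_n)\le 3$; conversely, any vertex cut must contain $u$ (otherwise $u$ remains adjacent to every surviving cycle vertex and the graph stays connected), and then at least two further cycle vertices are needed to break $C_{n-1}$, giving $\kappa(W_n)=3$. Proposition~\ref{pro3-1} then yields $\kappa^g(W_n)\ge 3$ for $g=0,1$. For the matching upper bound, I would take the cut $F=\{u, v_1, v_k\}$ and choose $k$ so that the two resulting arcs $v_2\cdots v_{k-1}$ and $v_{k+1}\cdots v_{n-1}$ each contain at least $g+1$ vertices. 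For $g=0$ any $k\in\{3,\ldots,n-2\}$ suffices; for $g=1$ taking $k=\lfloor n/2\rfloor+1$ makes each arc have at least two vertices, so that every surviving vertex has at least one neighbor in $W_n-F$, which is feasible precisely under the stated hypothesis $n\ge 7$.

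For part (2), write $P_n=v_1 v_2\cdots v_n$. The inequality $\kappa^g(P_n)\ge 1$ is immediate from connectedness. For $g=0$ removing any internal vertex disconnects $P_n$, so $\kappa^0(P_n)=1$. For $g=1$ I would remove an internal vertex $v_i$ with $3\le i\le n-2$, so that the two surviving subpaths $v_1\cdots v_{i-1}$ and $v_{i+1}\cdots v_n$ each contain at least two vertices and hence every surviving endpoint retains a neighbor; such an $i$ exists precisely when $n\ge 5$, and I would flag the small cases $n=3,4$ as a subtlety where $\kappa^1(P_n)$ is arguably not defined under the hypotheses as stated.

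The only non-routine point is the minimum-degree bookkeeping on the surviving components: the remaining arcs (for the wheel) and subpaths (for the path) must each contain at least $g+1$ vertices so that every surviving endpoint has the required number of neighbors in $G-F$. For $g=0$ this is vacuous, and for $g=1$ it is exactly what pins down the order hypotheses $n\ge 7$ and $n\ge 5$. Beyond this arithmetic there is no deeper combinatorial obstacle, since in both graphs the structure of cuts and of surviving components is completely transparent.
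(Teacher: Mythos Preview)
Your proposal is correct and follows essentially the same approach as the paper: lower bounds via Proposition~\ref{pro3-1} together with $\kappa(W_n)=3$ and $\kappa(P_n)=1$, and upper bounds via explicit cuts removing the hub plus two roughly antipodal cycle vertices (the paper takes $\{v,u_1,u_{\lceil n/2\rceil}\}$) or a near-middle path vertex (the paper takes $u_{\lceil n/2\rceil}$). Your caveat about $n\in\{3,4\}$ for $g=1$ is well taken and in fact sharper than the paper: $\kappa^1(P_n)$ does not exist for these orders, a point the statement elides but the paper's own proof implicitly concedes via the side condition $g\le\lfloor (n-1)/2\rfloor-1$.
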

\begin{proof}
$(1)$ From the definition of $\kappa^g(W_{n})$, there exists
$X\subseteq V(W_{n})$ such that $W_{n}-X$ is not connected and the
minimum degree of each component of $W_{n}-X$ is at least $g$. Note
that each component is a path. Then $g\leq 1$. From Proposition
\ref{pro3-1}, we have $\kappa^g(W_{n})\geq \kappa(W_{n})=3$. It
suffices to show that $\kappa^g(W_{n})\leq 3$ for $g=0,1$. Let $v$
be the center of $W_{n}$, and $W_{n}-v=C_{n-1}$, and
$V(C_{n-1})=\{u_1,u_2,\ldots,u_{n-1}\}$. Choose
$X=\{v,u_1,u_{\lceil\frac{n}{2}\rceil}\}$. Since $g=0,1$, it follows
that the minimum degree of each component of $W_{n}-X$ is at least
$g$, and hence $\kappa^g(W_{n})\leq 3$. So $\kappa^g(W_{n})=3$.

$(2)$ Similarly to the proof of $(1)$, we have $g=0,1$. From
Proposition \ref{pro3-1}, we have $\kappa^g(P_{n})\geq
\kappa(P_{n})=1$. It suffices to show $\kappa^g(P_{n})\leq 1$. Let
$P_n=u_1u_2\ldots u_{n}$. Choose $v=u_{\lceil n/2\rceil}$. Since
$0\leq g\leq \lfloor \frac{n-1}{2}\rfloor-1$, it follows that the
minimum degree of each component of $G-v$ is at least $g$, and hence
$\kappa^g(P_{n})\leq 1$. So $\kappa^g(P_{n})=1$.
\end{proof}

\section{Graphs with given $g$-good-neighbor connectivity}

In this section, we first characterize trees with given
$g$-good-neighbor connectivity. Next, we characterize graphs with
small $g$-good-neighbor connectivity.

\subsection{Trees with given $g$-good-neighbor connectivity}

Let $K_{1,n-t-1}$ be a star with center $v$ and leaves
$u_1,u_2,\ldots, u_{n-t-1}$, and let
$K_{1,a_1-1},K_{1,a_2-1},\ldots,K_{1,a_r-1}$ be $r$ stars with
centers $w_1,w_2,\ldots,w_{r}$, respectively. Furthermore, let
$T_n^*$ be a tree of order $n$ obtained from $K_{1,n-t-1}$ and
$K_{1,a_1-1},K_{1,a_2-1},\ldots,K_{1,a_r-1}$ by adding the edges
$\{vw_1,vw_2,\ldots,vw_{r}\}$, where $r\geq 2$, $\sum_{i=1}^ra_i=t$,
and $a_i\geq 2$ for each $i \ (1\leq i\leq r)$; see Figure 1.
\begin{figure}[!hbpt]
\begin{center}
\includegraphics[scale=0.7]{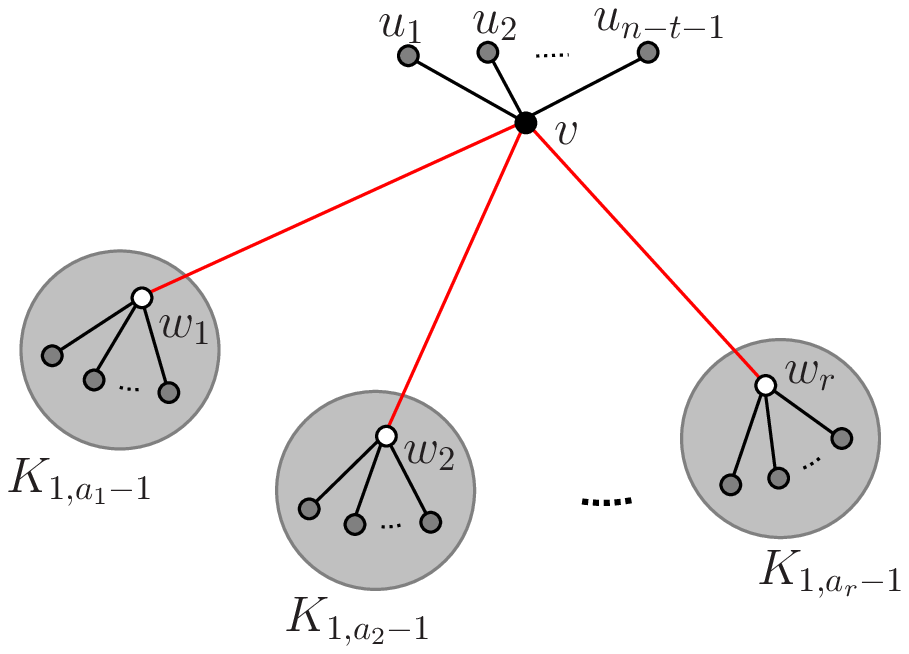}
\end{center}
\begin{center}
Figure 1: Tree $T_n^*$.
\end{center}\label{fig1}
\end{figure}

\begin{lemma}\label{lem4-1}
For $4\leq t\leq \frac{n+2}{2}$, we have
$$
\kappa^1(T_n^*)=n-t.
$$
\end{lemma}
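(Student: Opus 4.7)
The plan is to prove $\kappa^1(T_n^*) = n-t$ by establishing matching upper and lower bounds.

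For the upper bound, I will exhibit the explicit $1$-good-neighbor cut $F_0 = \{v, u_1, u_2, \ldots, u_{n-t-1}\}$ of size $n-t$. Removing $F_0$ decomposes $T_n^*$ into the $r \geq 2$ disjoint stars $K_{1, a_i - 1}$ for $i = 1, \ldots, r$. Since each $a_i \geq 2$, every component has at least two vertices and every vertex has at least one neighbor remaining (each leaf is still attached to its $w_i$, and each $w_i$ retains at least one leaf because $a_i - 1 \geq 1$), so $F_0$ is a valid $1$-good-neighbor cut.

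For the lower bound, let $F$ be an arbitrary $1$-good-neighbor cut and split on whether $v \in F$. If $v \in F$, then every leaf $u_j$ has $v$ as its unique neighbor in $T_n^*$, so keeping $u_j \notin F$ would isolate it in $T_n^* - F$, violating the $1$-good-neighbor condition. Hence $\{u_1, \ldots, u_{n-t-1}\} \subseteq F$ and $|F| \geq 1 + (n-t-1) = n-t$. If instead $v \notin F$, I will show that no valid cut $F$ can exist. Since $T_n^*$ is a tree, any vertex $x$ lying in a component of $T_n^* - F$ disjoint from $v$ must have its unique $x$--$v$ path blocked by $F$. If $x = u_j$ or $x = w_i$, that path is the single edge $xv$, forcing $v \in F$, a contradiction. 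The only remaining option is that $x$ is a leaf of $K_{1, a_i - 1}$ with $w_i \in F$; but then $x$'s sole neighbor has been removed, so $x$ is isolated in $T_n^* - F$, again violating the $1$-good-neighbor property. Therefore $T_n^* - F$ must be connected whenever $v \notin F$, ruling out this case.

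I expect the delicate step to be the impossibility argument in the case $v \notin F$: one must exploit that each $w_i$ is the unique neighbor of its pendant leaves and combine this with the $1$-good-neighbor condition to rule out any component disjoint from $v$. The hypotheses $r \geq 2$ and $a_i \geq 2$ are both essential---the former ensures the upper-bound cut actually produces multiple components, while the latter ensures each inner star has a leaf that would be isolated if $w_i$ were placed in $F$. The stated range $4 \leq t \leq (n+2)/2$ guarantees both that $T_n^*$ is well-defined with these parameters and that the value $n-t$ lies in the allowable interval $[1, n-4]$ provided by Corollary \ref{cor3-1}.
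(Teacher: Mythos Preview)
Your proof is correct and follows essentially the same approach as the paper's: both exhibit the cut $\{v,u_1,\ldots,u_{n-t-1}\}$ for the upper bound and split the lower bound on whether $v$ belongs to the cut, using the fact that the pendants $u_j$ (respectively the star leaves under $w_i$) become isolated once $v$ (respectively $w_i$) is removed. Your treatment of the case $v\notin F$ is in fact a bit more direct than the paper's iterative peeling-off of the inner stars, but the underlying idea is the same.
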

\begin{proof}
Choose $X=\{u_i\,|\,1\leq i\leq n-t-1\}\cup \{v\}$. Then $T_n^*-X$
is not connected and the minimum degree of each component is exactly
$1$. So $\kappa^1(T_n^*)\leq |X|=n-t$. It suffices to show
$\kappa^1(T_n^*)\geq n-t$. It suffices to prove that for any
$X\subseteq V(T_n^*)$ and $|X|\leq n-t-1$, if $T_n^*-X$ is not
connected, then there exists a component of $T_n^*-X$ such that its
minimum degree is exactly $0$. If $v\in X$, then there is an
isolated vertex in the resulting graph, as desired.

Suppose $v\notin X$. Since $T_n^*-X$ is not connected, it follows
that there exits some $w_{j_1}$ such that $w_{j_1}\in X$, and hence
$V(K_{1,a_{j_1}-1})\subseteq X$. Then there exits some $w_{j_2}$
such that $w_{j_2}\in X-w_{j_1}$, and hence
$V(K_{1,a_{j_2}-1})\subseteq X$. Furthermore, there exits some
$w_{j_3}$ such that $w_{j_3}\in X-w_{j_1}-w_{j_2}$, and hence
$V(K_{1,a_{j_3}-1})\subseteq X$. Continue this process, we have
$$
\{w_{j_1},w_{j_2},\ldots,w_{j_r}\}\cup
\left(\bigcup_{i=1}^rV(K_{1,a_{j_i}-1})\right)=\{w_{1},w_{2},\ldots,w_{r}\}\cup
\left(\bigcup_{i=1}^rV(K_{1,a_{i}-1})\right)\subseteq X.
$$
Clearly, $T_n^*-X$ is connected, a contradiction. So
$\kappa^1(T_n^*)\geq n-t$, and hence $\kappa^1(T_n^*)=n-t$.
\end{proof}

Trees with $\kappa^g(T_n)=n-t$ for general $g$ and $t$ can be
characterized.
\begin{theorem}\label{th4-3}
Let $n,g$ be two integers and $T_n$ be a tree of order $n$ with
$0\leq g\leq \left\{\Delta(T_n),\left\lfloor
\frac{n-3}{2}\right\rfloor\right\}$. Then $\kappa^g(T_n)=n-t$ if and
only if $T_n$ satisfies one of the following conditions.

$(1)$ $g=0$ and $n=t+1$;

$(2)$ $g=1$, $4\leq t\leq \frac{n+2}{2}$ and $T_n=T_n^*$.
\end{theorem}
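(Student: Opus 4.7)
The plan is to split the analysis by the value of $g$ and invoke Lemma~\ref{lem4-1} for one direction of the $g=1$ case.

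First, I would observe that $\kappa^g(T_n)$ can only exist for $g \in \{0,1\}$: for any $X \subseteq V(T_n)$ with $T_n - X$ disconnected, every component of $T_n - X$ is a subtree of $T_n$ and thus has minimum degree at most $1$ (it is either a single vertex, of degree $0$, or a non-trivial tree, which contains a leaf). Hence no $g$-good-neighbor cut exists for $g \geq 2$, and the two cases listed in the theorem exhaust the possibilities for $g$.

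For $g = 0$, every tree with $n \geq 2$ satisfies $\kappa(T_n) = 1$, so $\kappa^0(T_n) = n - t$ is equivalent to $t = n - 1$, giving condition (1) in both directions. The ``if'' part of the $g = 1$ case follows directly from Lemma~\ref{lem4-1}. For the ``only if'' part with $g = 1$, I would fix a minimum $1$-good-neighbor cut $X$ with $|X| = n - t$ and let $C_1,\ldots,C_r$ be the components of $T_n - X$, so that $r \geq 2$, each $|C_i| \geq 2$, and $\sum_{i=1}^r |C_i| = t$. By minimality of $X$, no vertex $x \in X$ may have all of its non-$X$ neighbors inside a single $C_i$: otherwise $X \setminus \{x\}$ would be a smaller $1$-good-neighbor cut, obtained by absorbing $x$ into $C_i$. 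Counting edges in $T_n$ then yields the identity $|E_{T_n}[X, \bigcup_i C_i]| = r + r_X - 1$, where $r_X$ is the number of connected components of $T_n[X]$. Combining this with the hypothesis $t \leq (n+2)/2$ (equivalently $|X| \geq (n-2)/2$), I would argue that $r_X = 1$, that exactly one vertex $v \in X$ serves as a ``connector'' having neighbors in more than one $C_i$, and that the remaining $|X| - 1 = n - t - 1$ vertices of $X$ are leaves of $T_n$ attached to $v$. A parallel argument would then show that each $C_i$ is a star $K_{1,a_i-1}$ whose center $w_i$ is the unique neighbor of $v$ in $C_i$. Setting $a_i = |C_i|$ identifies $T_n$ with the tree $T_n^*$ of parameters $(r; a_1,\ldots,a_r)$.

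The main obstacle is the last structural step: rigorously forcing $T_n[X]$ to be a star centered at $v$, and each $C_i$ to be a star attached to $v$ by a single edge. The upper bound $t \leq (n+2)/2$ is essential here, since for larger $t$ one can build non-$T_n^*$ trees (for instance with multiple connectors in $X$, or with pendant paths in place of pendant edges) that still achieve $\kappa^1(T_n) = n - t$; the rigidity of the characterization breaks down exactly at this threshold.
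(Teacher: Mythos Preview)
Your overall framework (reducing to $g \in \{0,1\}$, handling $g=0$ trivially, and invoking Lemma~\ref{lem4-1} for the ``if'' direction when $g=1$) matches the paper. The ``only if'' analysis for $g=1$, however, proceeds quite differently.

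The paper does \emph{not} work with the components of $T_n - X$. Instead it picks a single cut vertex $v \in X$ and studies the components $C_1,\ldots,C_r$ of $T_n - v$. The key step (Claim~2) is that exactly $n-t-1$ of these components must be isolated vertices: if there were fewer, say $x \leq n-t-2$, then deleting $v$ together with those $x$ singletons already yields a $1$-good-neighbor cut of size $x+1 < n-t$. This immediately identifies the pendant leaves $u_1,\ldots,u_{n-t-1}$ at $v$ and pins $X$ to be $\{v,u_1,\ldots,u_{n-t-1}\}$. The remaining components (Claims~3 and~4) are then forced to be stars whose centers are adjacent to $v$, by exhibiting explicit smaller $1$-good-neighbor cuts when they are not. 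No edge-counting identity or analysis of $T_n[X]$ is used.

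Your route---minimality of $X$, the edge-count $|E_{T_n}[X,\bigcup_i C_i]| = r + r_X - 1$, and forcing $r_X = 1$---is plausible in spirit but has two soft spots. First, your minimality claim (``no $x \in X$ has all its non-$X$ neighbors in a single $C_i$'') fails for those $x \in X$ whose neighbors all lie in $X$: absorbing such an $x$ back creates an isolated vertex, so $X \setminus \{x\}$ is not a $1$-good-neighbor cut. You would need a separate accounting for these ``interior'' vertices of $X$. Second, you invoke $t \leq (n+2)/2$ as a hypothesis, but it is part of the conclusion in condition~(2); strictly speaking it must be \emph{derived} from $\kappa^1(T_n) = n-t$. (The paper's Claim~3 also quietly uses the inequality $t - 4 < n - t$ at the last step, so this subtlety is present there as well; but your write-up makes the dependence explicit and calls it a hypothesis, which is the wrong framing.) The paper's single-vertex decomposition of $T_n - v$ sidesteps the first issue entirely and makes the structural conclusion fall out with less bookkeeping than your $T_n - X$ analysis.
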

\begin{proof}
If $g=0$ and $n=t+1$, then $\kappa^0(T_{t+1})=1=(t+1)-t=n-t$.
Suppose $g=1$, $4\leq t\leq \frac{n+2}{2}$ and $T_n=T_n^*$. From
Lemma \ref{lem4-1}, we have $\kappa^1(T_n^*)=n-t$.

Conversely, we suppose $\kappa^g(T_n)=n-t$. Then we have the
following claim.

\begin{claim}\label{Clm:1}
$g\leq 1$.
\end{claim}
\begin{proof}
Assume, to the contrary, that $g\geq 2$. From the definition of
$\kappa^g(T_n)$, there exists $X\subseteq V(T_n)$ and
$|X|=\kappa^g(T_n)$ such that $T_n-X$ is not connected and the
minimum degree of each component of $T_n-X$ is at least $g\geq 2$.
Since $T_n$ is a tree, it follows that each component of $T_n-X$ is
a subtree of $T_n$, and the minimum degree of each component is at
most $1$, a contradiction.
\end{proof}

From Claim \ref{Clm:1}, we have $g\leq 1$. If $g=0$, then
$1=\kappa(T_n)=\kappa^0(T_n)=n-t$, and hence $n=t+1$. If $g=1$, then
$\kappa^1(T_n)=n-t$. Then there exists $X\subseteq V(T_n)$ and
$|X|=n-t$ such that $T_n-X$ is not connected and the minimum degree
of each component is exactly $1$. Clearly, there exits a cut vertex
$v$ in $T_n$ such that $v\in X$. Let $C_1,C_2,\ldots,C_r$ be the
components of $T_n-v$. Since $T_n$ is a tree, it follows that
$|E_G[v,V(C_i)]|=1$ for each $i \ (1\leq i\leq r)$. Let $x$ be the
number of isolated vertices in $T_n-v$. Since $|X|=n-t$ and $v\in
X$, it follows that $x\leq n-t-1$. Furthermore, we have the
following claim.

\begin{claim}\label{Clm:2}
$x=n-t-1$.
\end{claim}
\begin{proof}
Assume, to the contrary, that $x\leq n-t-2$. By deleting these $x$
isolated vertices and $v$, the minimum degree of each component of
the resulting graph is at least $1$, and hence $\kappa^1(T_n)\leq
x+1\leq n-t-2+1=n-t-1<n-t$, a contradiction.
\end{proof}

Let $C_i \ (1\leq i\leq n-t-1)$ be the isolated vertices in $T_n-v$.
Then we have the following claim.
\begin{claim}\label{Clm:3}
For each $i \ (n-t\leq i\leq r)$, $C_i$ is a star.
\end{claim}
\begin{proof}
Assume, to the contrary, that there exists some $C_j$ such that
$C_j$ is not a star. Then $C_j$ contains a $2K_2$, say
$u_1u_2,u_3u_4$. Let $vv_j$ be the unique edge from $v$ to $C_j$.
Let $W$ be the set of pendent vertices adjacent to $v_j$ in $C_j$.
Since $C_j$ is not a star, it follows that there is at least one
edge in $C_j-v_j-W$. Since $T_n-X$ is not connected, it follows that
except $C_j$, there exists another component of order at least $2$,
say $C_k$. Then
$$
\kappa^1(T_n)\leq |\{v_j\}\cup W|\leq n-(n-t)-2-2\leq t-4<n-t,
$$
a contradiction.
\end{proof}

From Claim \ref{Clm:3}, $C_i$ is a star for each $i \ (n-t\leq i\leq
r)$. Let $w_i$ be the center of $C_i$, where $n-t\leq i\leq r$. Then
we have the following claim.

\begin{claim}\label{Clm:4}
For each $i \ (n-t\leq i\leq r)$, we have $vw_i\in E(T_n)$.
\end{claim}
\begin{proof}
Assume, to the contrary, that there exists some $w_j$ such that
$vw_j\notin E(T_n)$. Then there exists a vertex $a_j$ such that
$a_jv\in E(T_n)$. Note that $a_j$ is a leaf of $C_j$. Then $T_n-a_j$
is not connected and the minimum degree of each component is at
least $1$, and hence $\kappa^1(T_n)\leq 1$, a contradiction.
\end{proof}

From Claim \ref{Clm:4}, $vw_i\in E(T_n)$ for each $i \ (n-t\leq
i\leq r)$. Then $T_n=T_n^*$.
\end{proof}

\subsection{Graphs with small $g$-good-neighbor connectivity}

Graphs with $\kappa^g(G)=1$ can be characterized easily.
\begin{observation}\label{obs4-1}
Let $n,g$ be two integers and let $G$ be a connected graph of order
$n$ with $0\leq g\leq \left\{\Delta(T_n),\left\lfloor
\frac{n-3}{2}\right\rfloor\right\}$. Then $\kappa^g(G)=1$ if and
only if there exists a cut vertex $v$ in $G$ such that the minimum
degree of each connected component of $G-v$ is at least $g$.
\end{observation}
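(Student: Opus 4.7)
The plan is to prove Observation 4.1 by directly unpacking the definition of the $g$-good-neighbor connectivity, since the statement is essentially a translation of that definition into the language of cut vertices.

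For the forward direction, I would assume $\kappa^g(G)=1$. By the definition of $g$-good-neighbor connectivity, there is a $g$-good-neighbor faulty set $F\subseteq V(G)$ with $|F|=1$ such that $G-F$ is disconnected. Writing $F=\{v\}$, the disconnectedness of $G-v$ shows that $v$ is a cut vertex of $G$. The defining property $|N(u)\cap (V(G)-F)|\geq g$ for every $u\in V(G)-F$ translates to the statement that in $G-v$ every vertex has degree at least $g$; in particular, this is the minimum degree of each connected component of $G-v$.

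For the reverse direction, I would assume there is a cut vertex $v$ such that every component $C$ of $G-v$ satisfies $\delta(C)\geq g$. Then for every $u\in V(G)-\{v\}$ the number of neighbors of $u$ inside its own component of $G-v$ is at least $g$, which is exactly the $g$-good-neighbor condition applied to $F=\{v\}$. Since $v$ is a cut vertex, $G-v$ is disconnected, so $\{v\}$ is a $g$-good-neighbor cut of $G$ and hence $\kappa^g(G)\leq 1$. Combined with $\kappa^g(G)\geq 1$ (which holds because $G$ is connected and non-trivial under the hypotheses on $n$ and $g$), we conclude $\kappa^g(G)=1$.

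There is no real obstacle in this proof; the only subtle point is checking the reverse inequality $\kappa^g(G)\geq 1$, which follows from $\kappa^g(G)\geq \kappa(G)\geq 1$ as already noted in Proposition \ref{pro3-1}. The whole argument is a one- or two-line verification once the definitions are laid out, which is why the authors phrase it as an observation rather than a theorem.
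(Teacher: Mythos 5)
Your proof is correct and is exactly the definitional unpacking the authors intend — the paper gives no proof at all for this Observation, precisely because the statement is a direct translation of "$\{v\}$ is a $g$-good-neighbor cut" into the language of cut vertices, which is what you verify in both directions (including the lower bound $\kappa^g(G)\geq\kappa(G)\geq 1$ from Proposition \ref{pro3-1}).
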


We can also characterize graphs with $\kappa^g(G)=2$.
\begin{theorem}\label{th3-2}
Let $n,g$ be two integers and let $G$ be a connected graph of order
$n$ with $0\leq g\leq \left\{\Delta(T_n),\left\lfloor
\frac{n-3}{2}\right\rfloor\right\}$. Then $\kappa^g(G)=2$ if and
only if $G$ satisfies one of the following conditions.

$(1)$ $\kappa(G)=2$ and there exists a cut vertex set $\{u,v\}$ in
$G$ such that the minimum degree of each component of $G-\{u,v\}$ is
at least $g$;

$(2)$ $\kappa(G)=1$, and $g\geq 1$, and $(2.1),(2.2)$ hold, where
\begin{itemize}
\item[] $(2.1)$ for each cut vertex $u$, there exists a component
of $G-u$ such that its minimum degree is at most $g-1$,

\item[] $(2.2)$ $(a)$ there exists a cut vertex $v$ such that there is exactly one component in
$G-v$ having exactly one vertex $u$ of degree at most $g-1$ and the
neighbors of $u$ has degree at least $g+1$ and the minimum degree of
other vertices is at least $g$, or $(b)$ there exists a cut vertex
$v$ such that $G-v$ contains at least $3$ components, where one of
the component is an isolated vertex and the minimum degree of each
of the other components is at least $g$, or $(c)$ there are two
non-cut vertices $x,y$ such that $G-\{x,y\}$ is not connected and
the minimum degree of each component is at least $g$.
\end{itemize}
\end{theorem}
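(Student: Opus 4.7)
The plan is to prove both directions by splitting according to $\kappa(G)$. Since $1 \leq \kappa(G) \leq \kappa^g(G) = 2$ (by connectedness and Proposition~\ref{pro3-1}), we have $\kappa(G) \in \{1, 2\}$. For the forward direction, when $\kappa(G) = 2$, any size-$2$ $g$-good-neighbor cut is in particular a $2$-vertex cut set, so condition (1) holds directly.

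When $\kappa(G) = 1$, I would first show $g \geq 1$: if $g = 0$ then the minimum-degree condition is vacuous and any cut vertex would yield a size-$1$ $g$-good-neighbor cut, contradicting $\kappa^g(G) = 2$. Condition (2.1) then follows because a cut vertex $u$ with all components of $G - u$ having $\delta \geq g$ would itself be a size-$1$ $g$-good-neighbor cut. For condition (2.2), I would fix a size-$2$ $g$-good-neighbor cut $S = \{x, y\}$ and case-split on how many of $x, y$ are cut vertices. If neither is, then $S$ itself witnesses (c). Otherwise, without loss of generality $x$ is a cut vertex, and I would split on $y$'s position inside $G - x$. If $y$ is isolated in $G - x$, then $G - \{x, y\}$ is obtained from $G - x$ by deleting the singleton $\{y\}$, forcing $G - x$ to have at least three components and the non-singleton ones to have minimum degree $\geq g$, which is condition (b) with $v = x$. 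The delicate case—the main obstacle—is when $y$ lies in a non-trivial component $C$ of $G - x$: I must pin down that $y$ itself is the unique low-degree vertex of $C$. The key observation is that every $w \in C - y$ must satisfy $\deg_C(w) \geq g + 1$ if $w \sim y$ and $\deg_C(w) \geq g$ otherwise, so that $\deg_{G - \{x,y\}}(w) \geq g$. Then invoking (2.1), since $x$ is a cut vertex, some component of $G - x$ has $\delta \leq g - 1$; but components other than $C$ are unaffected by removing $y$ and thus inherit $\delta \geq g$ from the $g$-good-neighbor property of $S$, forcing the low-min-degree component to be $C$, and then $y$ is necessarily its unique vertex of degree $\leq g - 1$. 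This is precisely condition (a) with $v = x$ and $u = y$.

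The backward direction simply unpacks each scenario. Under (1), $\{u, v\}$ is a size-$2$ $g$-good-neighbor cut, so $\kappa^g(G) \leq 2$, and $\kappa^g(G) \geq \kappa(G) = 2$ yields equality. Under (2), condition (2.1) together with Observation~\ref{obs4-1} rules out every size-$1$ $g$-good-neighbor cut, giving $\kappa^g(G) \geq 2$; and each of (a), (b), (c) furnishes an explicit size-$2$ $g$-good-neighbor cut—namely $\{v, u\}$ in (a) and (b), and $\{x, y\}$ in (c)—so $\kappa^g(G) \leq 2$. The only slightly nontrivial verification is in (a), where removing $u$ drops its neighbors' degrees from $\geq g + 1$ down to $\geq g$, preserving the $g$-good-neighbor condition; the other two verifications are routine.
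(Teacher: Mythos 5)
Your proposal is correct and follows essentially the same route as the paper's proof: lower bound $\kappa^g(G)\geq 2$ via $(2.1)$, explicit two-vertex cuts for the upper bound, and for necessity a case analysis on how many vertices of a minimum $g$-good-neighbor cut $\{x,y\}$ are cut vertices, with your isolated-$y$ case matching the paper's Claim 6 (singleton component must be $\{y\}$, giving $(2.2)(b)$) and your non-trivial-component case matching Claims 7 and the $u=y$ argument (giving $(2.2)(a)$). Your write-up is in fact a bit more explicit than the paper's on the degree bookkeeping (neighbors of $y$ having degree at least $g+1$ in their component, and $G-x$ having at least three components in case $(b)$), but the underlying argument is the same.
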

\begin{proof}
Suppose that $G$ satisfies $(1)$ and $(2)$. Suppose that $(1)$
holds. Since the minimum degree of each component of $G-\{u,v\}$ is
at least $g$, it follows that $\kappa^g(G)\leq 2$. From Proposition
\ref{pro3-1}, we have $\kappa^g(G)\geq \kappa(G)=2$.

Suppose that $(2)$ holds. Since for each cut vertex $u$, there
exists a component of $G-u$ such that its minimum degree is at most
$g-1$, it follows that $\kappa^g(G)\geq 2$. If there exists a cut
vertex $v$ such that there is exactly one component in $G-v$ having
exactly one vertex $u$ of degree at most $g-1$ and the neighbors of
$u$ has degree at least $g+1$ and the minimum degree of other
vertices is at least $g$, then $\kappa^g(G)\leq |\{u,v\}|=2$. If
there exists a cut vertex $v$ such that $G-v$ contains at least $3$
components, where one of the component is an isolated vertex $u$ and
the minimum degree of each of the other components is at least $g$,
then $\kappa^g(G)\leq |\{u,v\}|=2$. If there are two non-cut
vertices $x,y$ such that $G-\{x,y\}$ is not connected and the
minimum degree of each component is at least $g$, $\kappa^g(G)\leq
|\{x,y\}|=2$. So we have $\kappa^g(G)=2$.

Conversely, we suppose $\kappa^g(G)=2$. From Proposition
\ref{pro3-1}, we have $\kappa(G)\leq 2$. Suppose $\kappa(G)=2$. If
for each vertex cut set $\{u,v\}$ in $G$, there exists a component
of $G-\{u,v\}$ such that its minimum degree is at most $g-1$, then
$\kappa^g(G)\geq 3$, a contradiction. So there exists a vertex cut
set $\{u,v\}$ in $G$ such that the minimum degree of each component
of $G-\{u,v\}$ is at least $g$, as desired.

Suppose $\kappa(G)=1$. Then we have the following claim.

\begin{claim}\label{Clm:5}
$g\geq 1$.
\end{claim}
\begin{proof}
Assume, to the contrary, that $g=0$. Since $\kappa(G)=1$, it follows
that there exists a cut vertex $v$ and the minimum degree of each
component of $G-v$ is at least $0$, and hence $\kappa^0(G)\leq 1$,
which contradicts to the fact $\kappa^g(G)=2$.
\end{proof}

From Claim \ref{Clm:5}, we have $g\geq 1$. Since $\kappa^g(G)=2$, we
have the following facts.
\begin{fact}\label{Fact:Fact2}
For any cut vertex $v$, there exists a component of $G-v$ such that
its minimum degree is at most $g-1$.
\end{fact}

\begin{fact}\label{Fact:Fact3}
There exist two vertices $x,y$ in $G$ such that $G-\{x,y\}$ is not
connected and the minimum degree of each component of $G-\{x,y\}$ is
at least $g$.
\end{fact}

Suppose that one of $x,y$ is a cut vertex of $G$. Without loss of
generality, we assume that $x$ is a cut vertex of $G$. Let
$C_1,C_2,\ldots,C_r$ be the components of $G-x$.

\begin{claim}\label{Clm:6}
At most one of $C_1,C_2,\ldots,C_r$ has exactly one vertex.
\end{claim}
\begin{proof}
Assume, to the contrary, that there exist $C_i,C_j \ (1\leq i\neq
j\leq r)$ such that $|V(C_i)|=|V(C_j)|=1$. Then at least one
component of $G-x-y$ is a isolated vertex, which contradicts to Fact
\ref{Fact:Fact3}.
\end{proof}

From Claim \ref{Clm:6}, if one of $C_1,C_2,\ldots,C_r$, say $C_1$,
has exactly one vertex, then $C_1=\{y\}$ and $(2.2) (b)$ holds.
Suppose that each $C_i \ (1\leq i\leq r)$ has at least two vertices.
\begin{claim}\label{Clm:7}
Exactly one of $C_1,C_2,\ldots,C_r$ has minimum degree at most
$g-1$.
\end{claim}
\begin{proof}
Assume, to the contrary, that there exist $C_i,C_j \ (1\leq i\neq
j\leq r)$ such that $\delta(C_i)\leq g-1$ and $\delta(C_j)\leq g-1$.
Then there is a component of $G-x-y$ such that its minimum degree is
at most $g-1$, which contradicts to Fact $2$.
\end{proof}

From Claim \ref{Clm:7}, exactly one of $C_1,C_2,\ldots,C_r$, say
$C_1$, has minimum degree at most $g-1$. Then there exists a vertex
$u$ of degree at most $g-1$. We claim that $u=y$. Assume, to the
contrary, that the degree of $u$ is most $g-1$ in $G-x-y$, a
contradiction. Then $u=y$. From Fact \ref{Fact:Fact2}, $(2.2) (a)$
holds.

Suppose that neither $x$ nor $y$ is a cut vertex of $G$. From Fact
\ref{Fact:Fact2}, $G-\{x,y\}$ is not connected and the minimum
degree of each component is at least $g$. Then $(2.2) (c)$ holds.
\end{proof}

For $(1)$, $(2.2)(a)$, $(2.2)(b)$ and $(2.2)(c)$, we show the
following examples corresponding them.\\

\noindent {\bf Example 4.1.} Let $H_1$ be a graph obtained from
$K_{g+1}$ and $K_{n-g-3}$ by adding two new vertices $u,v$ and edges
in $E_{H_1}[u,V(K_{g+1})]\cup E_{H_1}[u,V(K_{n-g-3})]\cup
E_{H_1}[v,V(K_{g+1})]\cup E_{H_1}[v,V(K_{n-g-3})]$, where $n\geq
2g+4$. From Theorem \ref{th3-2},
$\kappa(H_1)=\kappa_g(H_1)=2$.\\

\noindent {\bf Example 4.2.} Let $H_2$ be a graph obtained from
$K_{g+1}$ and $K_{n-g-3}$ by adding two new vertices $u,v$ and edges
in $\{uv,vv_1,uu_1\}$, where $u_1\in V(K_{n-g-3})$ and $v_1\in
V(K_{g+1})$ and $n\geq 2g+4$. From Theorem \ref{th3-2},
$\kappa(H_2)=1$ and $\kappa_g(H_2)=2$.\\

\noindent {\bf Example 4.3.} Let $H_3$ be a graph obtained from
$K_{g+1}$ and $K_{n-g-3}$ by adding two new vertices $u,v$ and edges
in $\{uv,vv_1,vv_2\}$, where $v_1\in V(K_{n-g-3})$ and $v_2\in
V(K_{g+1})$ and $n\geq 2g+4$. From Theorem \ref{th3-2},
$\kappa(H_3)=1$ and $\kappa_g(H_3)=2$.\\

\noindent {\bf Example 4.4.} Let $H_4$ be a graph obtained from
$K_{g+1}$ and $K_{n-g-1}$ by adding edges in $\{vv_1,vv_2\}$, where
$v\in V(K_{g+1})$ and $v_1,v_2\in V(K_{n-g-1})$ and $n\geq 2g+4$.
From Theorem \ref{th3-2}, $\kappa(H_4)=1$ and $\kappa_g(H_4)=2$.

\section{Extremal problems}

We now consider the three extremal problems that we stated in the
Introduction.

Suppose that $n,k,g$ are three integers such that $(n-k)g$ is even
and $2\leq g\leq \left\lfloor \frac{n-k-2}{2}\right\rfloor$. Let
$F_1,F_2$ be two $g$-regular graphs such that
$|V(F_1)|+|V(F_2)|=n-k$. Let $F^k_n$ be a graph obtained from
$F_1,F_2$ and a star $K_{1,k-1}$ with center $v$ such that
$|E_G[v,F_1]|=|E_G[v,F_1]|=1$.
\begin{lemma}\label{lem4-1}
Let $n,g,k$ be three integers with $2\leq g\leq \left\lfloor
\frac{n-k-2}{2}\right\rfloor$. If $(n-k)g$ is even, then
$$
\kappa^g(F^k_n)=k.
$$
\end{lemma}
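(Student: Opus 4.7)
The plan is to establish $\kappa^g(F^k_n) = k$ by matching an upper bound (via an explicit cut) against a lower bound (via analysis of the forced structure of any such cut).

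First, for the upper bound, I would take the natural candidate $X = \{v\} \cup \{u_1, \ldots, u_{k-1}\}$, where $u_1, \ldots, u_{k-1}$ are the leaves of the star $K_{1,k-1}$. Removing $X$ visibly splits the graph into $F_1$ and $F_2$, and since each $F_i$ is $g$-regular and keeps all of its internal edges (the only $v$-edge incident to $F_i$ was the single bridging edge $va$ or $vb$, which is not part of the $g$-regular structure), every remaining vertex has exactly $g$ neighbors in $V(F^k_n) - X$. This immediately gives $\kappa^g(F^k_n) \leq k$.

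Next, for the lower bound, I take an arbitrary $g$-good-neighbor cut $S$ and show $|S| \geq k$. The crucial observation is that each leaf $u_i$ has degree exactly $1$ in $F^k_n$ (its unique neighbor is $v$), so for $g \geq 2$ it cannot survive the cut: if $u_i \in V(F^k_n) - S$, then $|N(u_i) \cap (V(F^k_n) - S)| \leq 1 < g$, a contradiction. Hence $\{u_1, \ldots, u_{k-1}\} \subseteq S$, and $|S| \geq k-1$ follows at once. To upgrade this to $|S| \geq k$, I would rule out the boundary case $S = \{u_1, \ldots, u_{k-1}\}$: in that case $F^k_n - S$ has vertex set $V(F_1) \cup V(F_2) \cup \{v\}$ together with all $F_i$-edges and the two bridging edges $va, vb$, and since the $F_i$ are connected, $v$ merges them into a single connected component, contradicting the fact that $S$ is supposed to disconnect the graph.

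The proof is largely mechanical once the construction is in place. The only point that genuinely requires care is the tacit assumption that $F_1$ and $F_2$ are chosen to be \emph{connected} $g$-regular graphs; otherwise the $k-1$ leaves alone would already form a $g$-good-neighbor cut of $F^k_n$ and the claimed value would drop to $k-1$. The constraints $(n-k)g$ even and $2 \leq g \leq \lfloor(n-k-2)/2\rfloor$ are precisely what guarantee that two connected $g$-regular graphs with combined order $n-k$ and each of order at least $g+1$ exist (e.g., two suitably sized circulants or cliques), so this is not a real obstruction.
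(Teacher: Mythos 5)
Your proof is correct and follows essentially the same route as the paper's: the explicit cut $X=V(K_{1,k-1})$ for the upper bound, and for the lower bound the observation that $g\geq 2$ forces the $k-1$ leaves into any $g$-good-neighbor cut, with the case of the leaves alone excluded because the remaining graph stays connected through $v$. Your remark that $F_1,F_2$ must be taken connected is a genuine (and worthwhile) point that the paper leaves tacit, but it does not change the argument.
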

\begin{proof}
Let $X=V(K_{1,k-1})$. Then $F^k_n-X$ is not connected and the
minimum degree of each component of $G-X$ is at least $g$, and hence
$\kappa^g(F^k_n)\leq k$. Let $\kappa^g(F^k_n)=t$. It suffices to
show $t\geq k$. From the definition of $\kappa^g(F^k_n)$, there
exists $X\subseteq V(F^k_n)$ with $|X|=t$ such that the minimum
degree of each component of $F^k_n-X$ is at least $g$. Since $g\geq
2$, it follows that $V(K_{1,k-1})-v\subseteq X$. If
$X=V(K_{1,k-1})-v$, then $G$ is connected, a contradiction. So
$|X|\geq |V(K_{1,k-1})|-1+1=k$. So $\kappa^g(F^k_n)\geq k$, and
hence $\kappa^g(F^k_n)=k$.
\end{proof}

Suppose that $n,k,g$ are three integers such that $(n-k)g$ is odd
and $2\leq g\leq \left\lfloor \frac{n-k-2}{2}\right\rfloor$. Then
$n-k$ is odd. Let $a,b$ be two integers such that $a$ is even, and
$b$ is odd, and $a\geq g+1$, and $b\geq g+1$, and $a+b=n-k$. Let
$H_1$ be a $g$-regular graph of order $a$. Let $H_2$ be a graph of
order $b$ such that the degree of one vertex is exactly $g+1$, and
the degree of each of the other vertices is exactly $g$. Let $H^k_n$
be a graph obtained from $H_1,H_2$ and a star $K_{1,k-1}$ with
center $v$ such that $|E_G[v,H_1]|=|E_G[v,G_1]|=1$. Similarly, we
have the following lemma.
\begin{lemma}\label{lem4-2}
Let $n,g,k$ be three integers with $2\leq g\leq \left\lfloor
\frac{n-k-2}{2}\right\rfloor$. Then
$$
\kappa^g(H^k_n)=k.
$$
\end{lemma}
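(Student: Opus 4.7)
My plan is to mirror the proof of Lemma 4.1, the only differences being bookkeeping to accommodate the asymmetric structure of $H_2$ (which is not quite $g$-regular but has a single ``excess'' vertex of degree $g+1$). The proof will split into an upper bound via an explicit cut and a lower bound via a structural argument that leverages the hypothesis $g \geq 2$.

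For the upper bound, I will take $X = V(K_{1,k-1})$, so that $|X|=k$ and $H^k_n - X$ consists of the two vertex-disjoint pieces $H_1$ and $H_2$. I need to verify that the minimum degree of each component is at least $g$. In $H_1$, every vertex originally has degree $g$, and only one vertex (the one joined to $v$) had its degree raised to $g+1$ in $H^k_n$; removing $X$ restores this to $g$, so $\delta(H_1) = g$. For $H_2$, by construction all vertices have degree $g$ except for one of degree $g+1$, and again the edge $vw$ with $w\in V(H_2)$ only inflates one degree by $1$ in $H^k_n$; after removing $v$, the internal degrees are exactly as prescribed, so $\delta(H_2) \geq g$. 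Hence $X$ is a $g$-good-neighbor cut, giving $\kappa^g(H^k_n)\leq k$.

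For the lower bound, let $X$ be any $g$-good-neighbor cut with $|X|=t$. The key observation, identical to the argument in Lemma~\ref{lem4-1}, is that each of the $k-1$ leaves $u_1,\ldots,u_{k-1}$ of $K_{1,k-1}$ has degree exactly $1$ in $H^k_n$; since $g\geq 2$, any such leaf not in $X$ would violate $|N(u_i)\cap (V\setminus X)|\geq g$. Therefore $\{u_1,\ldots,u_{k-1}\}\subseteq X$, i.e.\ $V(K_{1,k-1})\setminus\{v\}\subseteq X$. If we had $X = V(K_{1,k-1})\setminus\{v\}$, then $H^k_n-X$ would still be connected through $v$ (which retains its two edges to $H_1$ and $H_2$), contradicting the assumption that $X$ is a cut. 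Thus $X$ contains at least one more vertex, so $t\geq k$, and combined with the upper bound we conclude $\kappa^g(H^k_n)=k$.

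The main (and only) subtle point I expect is making sure the asymmetry of $H_2$ causes no trouble in the upper-bound argument: one must be careful to distinguish the vertex of $H_2$ that has intrinsic degree $g+1$ from the vertex that receives the edge $vw$ (they may or may not coincide), and to check that after removing $v$ the minimum degree bound $\delta(H_2)\geq g$ still holds in every case. This is a straightforward degree count, but it is the only place where the proof genuinely differs from that of Lemma~\ref{lem4-1}, so it deserves an explicit sentence or two. No other step should require anything beyond the reasoning already carried out in the previous lemma.
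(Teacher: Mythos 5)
Your proof is correct and follows exactly the argument the paper intends: the paper omits the proof of this lemma entirely, stating only that it is ``similar'' to the proof for $F^k_n$, and your argument is precisely that proof adapted to $H^k_n$ (upper bound via the cut $X=V(K_{1,k-1})$, lower bound from $g\geq 2$ forcing all $k-1$ leaves into $X$ plus at least one further vertex to disconnect). Your extra care in checking that the asymmetric degree sequence of $H_2$ does not break the bound $\delta\geq g$ after removing $v$ is a welcome addition, not a deviation.
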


Let $T_n'$ be a tree of order $n$ obtained from three stars
$K_{1,k-1},K_{1,a-1},K_{1,b-1}$ with centers $x,u,v$ by adding two
edges $xu,xv$.
\begin{lemma}\label{lem4-3}
Let $n,k$ be two integers with $n\geq k+4$. Then
$$
\kappa^1(T_n')=k.
$$
\end{lemma}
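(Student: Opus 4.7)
The plan is to prove the two inequalities $\kappa^1(T_n')\leq k$ and $\kappa^1(T_n')\geq k$ separately. Let me fix notation: the $k-1$ leaves of $x$ in $K_{1,k-1}$ are pendant vertices whose unique neighbor in $T_n'$ is $x$, and analogously the $a-1$ leaves of $u$ (resp.\ $b-1$ leaves of $v$) have $u$ (resp.\ $v$) as their unique neighbor. Note that $n\geq k+4$ together with the construction forces $a,b\geq 2$, so each of $K_{1,a-1}$ and $K_{1,b-1}$ contains at least one edge.

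For the upper bound I would take $X=V(K_{1,k-1})=\{x\}\cup\{\text{leaves of }x\}$, so $|X|=k$. The deletion removes $x$ together with its pendant leaves and the two edges $xu$ and $xv$, leaving exactly $T_n'-X = K_{1,a-1}\cup K_{1,b-1}$. Each of these two components is a star on at least two vertices, so every remaining vertex has degree at least $1$. Thus $X$ is a valid $1$-good-neighbor cut and $\kappa^1(T_n')\leq k$.

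For the lower bound I would argue by contradiction: assume $X\subseteq V(T_n')$ is a $1$-good-neighbor cut with $|X|\leq k-1$, and split on whether $x\in X$. If $x\in X$, then each of the $k-1$ leaves of $x$ has its unique neighbor removed, and since $|X\setminus\{x\}|\leq k-2$ at least one such leaf survives in $T_n'-X$ as an isolated vertex, violating the minimum-degree-$1$ requirement. If $x\notin X$, then the component of $T_n'-X$ containing $x$ also contains $u$ (resp.\ $v$) whenever $u\notin X$ (resp.\ $v\notin X$) together with the surviving leaves of that center, so disconnectedness forces at least one of $u,v$ to lie in $X$; by symmetry assume $u\in X$. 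If any leaf of $u$ is not in $X$, it is isolated in $T_n'-X$, a contradiction. Otherwise $V(K_{1,a-1})\subseteq X$; then the piece attached to $x$ through $v$ is either all still connected to $x$ (if $v\notin X$), making $T_n'-X$ connected, or we repeat the argument with $v$ and either find an isolated leaf of $v$ or conclude $V(K_{1,b-1})\subseteq X$, in which case $T_n'-X$ is the star on $x$ and its surviving leaves, again connected. Every branch contradicts the hypothesis, so $|X|\geq k$.

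The main obstacle is the subcase analysis in the second case, where one has to track carefully that once a center $u$ or $v$ is deleted, either an isolated pendant appears (violating $g=1$) or the entire adjacent star must be absorbed into $X$, which in turn leaves the rest connected. The combinatorial content is small, but writing the case split cleanly—so that every escape route either produces an isolated leaf or a connected leftover—is the step that requires the most care.
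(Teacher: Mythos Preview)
Your proof is correct and follows essentially the same approach as the paper: the upper bound via deleting $X=V(K_{1,k-1})$, and the lower bound via a case split on whether $x\in X$, showing in each branch that either an isolated pendant vertex appears or the remainder stays connected. Your write-up spells out the subcases a bit more explicitly than the paper's, but the logic is identical.
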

\begin{proof}
Let $X=V(K_{1,k-1})$. Then $T_n'-X$ is not connected and the minimum
degree of each component of $T_n'-X$ is at least $1$, and hence
$\kappa^1(T_n')\leq k$. Let $\kappa^1(T_n')=t$. It suffices to show
$t\geq k$. From the definition of $\kappa^1(T_n')$, there exists
$X\subseteq V(T_n')$ with $|X|=t$ such that the minimum degree of
each component of $T_n'-X$ is at least $1$. If $x\notin X$, then
$u\in X$ or $v\in X$. Without loss of generality, let $u\in X$. Then
$V(K_{1,a-1})\subseteq X$. Since $T_n'-X$ is not connected and
$x\notin X$, it follows that $v\in X$. Then $V(K_{1,b-1})\subseteq
X$. Clearly, $T_n'-X$ is connected, a contradiction. If $x\in X$,
then $V(K_{1,k-1})\subseteq X$, and hence $|X|=t\geq k$. So
$\kappa^1(T_n')\geq k$, and hence $\kappa^1(T_n')=k$.
\end{proof}

\begin{theorem}\label{pro5-1}
Let $n,g,k$ be three integers with $1\leq g\leq \left\lfloor
\frac{n-k-2}{2}\right\rfloor$ and $1\leq k\leq n-2g-2$.

$(1)$ If $g=1$, then $s(n,k)=n-1$.

$(2)$ If $n-k$ is even and $g\geq 2$, then
$$
s(n,k)=\frac{(n-k)g}{2}+k+1.
$$

$(3)$ If $n-k$ is odd and $g\geq 2$, then
$$
s(n,k)=\frac{(n-k)g+1}{2}+k+1.
$$
\end{theorem}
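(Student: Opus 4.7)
The plan is to prove the three cases by matching constructive upper bounds against a uniform counting lower bound. For the upper bounds, I would simply compute the edge counts of the graphs already constructed in Lemmas \ref{lem4-1}, \ref{lem4-2}, and \ref{lem4-3}: the tree $T_n'$ contributes exactly $n-1$ edges (a tree on $n$ vertices), $F^k_n$ contributes $\frac{|V(F_1)|g + |V(F_2)|g}{2} + (k-1) + 2 = \frac{(n-k)g}{2} + k + 1$ edges, and $H^k_n$ contributes $\frac{ag}{2} + \frac{(g+1)+(b-1)g}{2} + (k-1) + 2 = \frac{(n-k)g+1}{2} + k + 1$ edges. Combined with the three cited lemmas, which certify that each construction has $g$-good-neighbor connectivity exactly $k$, this gives $s(n,k) \le n-1$ in case $(1)$ and $s(n,k) \le \frac{(n-k)g + \epsilon}{2} + k + 1$ in cases $(2)$--$(3)$, where $\epsilon \in \{0,1\}$ according to parity.

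For the matching lower bound, take any $G \in \mathscr{G}(n,k)$ and let $X$ be a $g$-good-neighbor cut with $|X| = k$, and let $C_1, \dots, C_r$ ($r \ge 2$) be the components of $G - X$. Since any edge from $C_i$ to $C_j$ with $i \ne j$ would merge two components, a vertex $v \in C_i$ has all its neighbors outside $X$ within $C_i$; the $g$-good-neighbor condition then forces $\delta(C_i) \ge g$, so $|E(C_i)| \ge \frac{|V(C_i)|g}{2}$. Summing and using $\sum_i |V(C_i)| = n-k$, plus the fact that each $|E(C_i)|$ is an integer, yields $\sum_i |E(C_i)| \ge \lceil (n-k)g/2 \rceil$, which equals $\frac{(n-k)g}{2}$ in case $(2)$ and $\frac{(n-k)g+1}{2}$ in case $(3)$. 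Next, contract each $C_i$ to a single vertex; the resulting graph on $k+r$ vertices must still be connected, so at least $k+r-1 \ge k+1$ edges lie in $E_G[X, V\setminus X] \cup E(G[X])$. Adding these bounds gives exactly the inequalities in $(2)$ and $(3)$.

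For case $(1)$, the above component-counting bound degenerates to something weaker than $n-1$, but now $G$ being connected on $n$ vertices forces $|E(G)| \ge n - 1$ directly, and this is matched by $T_n'$. So I would separate this case and simply invoke the tree lower bound. The main obstacle will be the parity accounting in the lower bound for case $(3)$: one needs that $(n-k)g$ odd forces at least one component $C_i$ with $|V(C_i)|g$ odd, hence $|E(C_i)| \ge \frac{|V(C_i)|g + 1}{2}$, and one must verify that $H^k_n$ is actually realisable, i.e.\ that a graph $H_2$ of odd order $b \ge g+1$ with degree sequence $(g+1, g, \dots, g)$ exists (which follows from Erd\H{o}s--Gallai for $b \ge g+2$); these verifications are routine but must be checked to ensure both bounds are achieved. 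Everything else reduces to the edge-count and contraction arguments above.
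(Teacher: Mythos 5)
Your proposal is correct and follows essentially the same route as the paper: the upper bounds come from the edge counts of the constructions $T_n'$, $F^k_n$, $H^k_n$ certified by Lemmas \ref{lem4-3}, \ref{lem4-1} and \ref{lem4-2}, and the lower bound comes from $e(G-X)\geq \frac{(n-k)g}{2}$ (with the parity correction when $(n-k)g$ is odd) plus at least $k+1$ further edges forced by connectivity. In fact you spell out two steps the paper leaves implicit---the contraction argument giving the $k+1$ connecting edges and the parity accounting, together with the realisability of $H_2$---but these are elaborations of, not departures from, the paper's argument.
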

\begin{proof}
$(1)$ Let $G=T_n'$. From Lemma \ref{lem4-3}, we have $s(n,k)\leq
n-1$. Since we only consider connected graphs, it follows that
$s(n,k)\geq n-1$, and hence $s(n,k)=n-1$.

$(2)$ Suppose that $n-k$ is even. Let $G=F^k_n$. From Lemma
\ref{lem4-1}, we have $s(n,k)\leq \frac{(n-k)g}{2}+k+1$. It suffice
to show $s(n,k)\geq \frac{(n-k)g}{2}+k+1$. Let $G$ be a conneced
graph of order $n$ with $\kappa_g(G)=k$ such that $e(G)$ is
minimized. Then exists $X\subseteq V(G)$ with $|X|=k$ such that the
minimum degree of each component of $G-X$ is at least $g$. Then
$e(G-X)\geq \frac{(n-k)g}{2}$. Since $G$ is connected, it follows
that $e(G)\geq \frac{(n-k)g}{2}+k+1$, and hence
$s(n,k)=\frac{(n-k)g}{2}+k+1$.

$(3)$ Suppose that $n-k$ is odd. Let $G=H^k_n$. From Lemma
\ref{lem4-2}, we have $s(n,k)\leq \frac{(n-k)g+1}{2}+k+1$. Similarly
to the proof of $(1)$, we have $s(n,k)=\frac{(n-k)g+1}{2}+k+1$.
\end{proof}

\begin{lemma}\label{lem5-2}
Let $n,g,k$ be three integers with $1\leq g\leq \left\lfloor
\frac{n-k-2}{2}\right\rfloor$. Let $G^k_n$ be the graph obtained
from three cliques $K_{n-k-g},K_{k-1},K_{g+1}$ by adding the edges
in $E_{G^k_n}[K_{n-k-g},K_{k-1}]\cup E_{G^k_n}[K_{g+1},K_{k-1}]$.
Then
$$
\kappa^g(G^k_n)=k-1.
$$
\end{lemma}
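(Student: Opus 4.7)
\textbf{Proof proposal for Lemma \ref{lem5-2}.} The plan is to prove the two matching inequalities $\kappa^g(G^k_n)\leq k-1$ and $\kappa^g(G^k_n)\geq k-1$ separately; the graph is built precisely so that both directions reduce to a single structural observation about cliques glued along a common part.

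For the upper bound, I would exhibit the explicit candidate cut $X=V(K_{k-1})$ (the middle clique). Removing $X$ leaves the two disjoint cliques $K_{n-k-g}$ and $K_{g+1}$, so $G^k_n-X$ is disconnected. In $K_{g+1}$ every surviving vertex has degree exactly $g$, and in $K_{n-k-g}$ every surviving vertex has degree $n-k-g-1$, which is at least $g$ by rearranging the hypothesis $g\leq \left\lfloor\frac{n-k-2}{2}\right\rfloor$ into $n-k-g-1\geq g+1$. Hence $X$ is a valid $g$-good-neighbor cut of size $k-1$, giving $\kappa^g(G^k_n)\leq k-1$.

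For the lower bound, set $A=V(K_{n-k-g})$, $B=V(K_{k-1})$, $C=V(K_{g+1})$, and let $X$ be any vertex set whose removal disconnects $G^k_n$. The key structural point is that, by the way the edges were added, $G^k_n[A\cup B]$ and $G^k_n[B\cup C]$ are both complete graphs; only the pairs with one end in $A$ and the other in $C$ are non-adjacent. Consequently, in $G^k_n-X$ any two surviving vertices in $A\cup B$ are still adjacent, and any two surviving vertices in $B\cup C$ are still adjacent, so each of these sets lies in a single component of $G^k_n-X$. The only way for $G^k_n-X$ to be disconnected is therefore to separate the $A$-side from the $C$-side, which forces (i) $B\subseteq X$, and (ii) both $A\setminus X\neq\emptyset$ and $C\setminus X\neq\emptyset$. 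From (i) alone, $|X|\geq |B|=k-1$, so $\kappa^g(G^k_n)\geq k-1$.

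Combining the two bounds yields $\kappa^g(G^k_n)=k-1$. This argument has no genuinely hard step: the only bookkeeping point is the one-line degree check $n-k-g-1\geq g$ needed in the upper bound, and notably the lower bound does not even use the $g$-good-neighbor hypothesis on $X$ — merely being a disconnecting set already forces $B\subseteq X$, which is a pleasant feature of the construction.
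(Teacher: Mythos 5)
Your proof is correct and takes essentially the same route as the paper: the explicit candidate cut $X=V(K_{k-1})$ with the degree check $n-k-g-1\geq g$ for the upper bound, and the observation that $\kappa^g(G^k_n)\geq\kappa(G^k_n)=k-1$ for the lower bound. The only difference is that the paper asserts $\kappa(G^k_n)=k-1$ as ``clear'' while you supply the short argument that any disconnecting set must contain all of $V(K_{k-1})$; that is a useful detail but not a different method.
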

\begin{proof}
Let $X=K_{k-1}$. Since $1\leq g\leq \left\lfloor
\frac{n-k-2}{2}\right\rfloor$, it follows that $G^k_n-X$ is not
connected and each component has at least $g+1$ vertices, and hence
$\kappa^g(G^k_n)\leq k-1$. Clearly, $\kappa^g(G^k_n)\geq
\kappa(G^k_n)=k-1$, and hence $\kappa^g(G^k_n)=k-1$.
\end{proof}

\begin{theorem}\label{th5-2}
Let $n,g,k$ be two integers with $1\leq g\leq \left\lfloor
\frac{n-k-2}{2}\right\rfloor$ and $1\leq k\leq n-2g-2$. Then
$$
f(n,k)={n\choose 2}-(n-k-g)(g+1)+1.
$$
\end{theorem}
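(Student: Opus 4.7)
The plan is to establish $f(n,k)={n\choose 2}-(n-k-g)(g+1)+1$ by proving matching lower and upper bounds: the lower bound from a concrete extremal graph, and the upper bound from an edge-counting argument applied to any connected graph $G$ with $\kappa^g(G)\le k-1$.

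For the lower bound $f(n,k)\ge {n\choose 2}-(n-k-g)(g+1)+1$, I would invoke the graph $G^k_n$ from Lemma \ref{lem5-2}, obtained from three cliques $K_{n-k-g}$, $K_{k-1}$, $K_{g+1}$ by completely joining $K_{k-1}$ to each of the other two. The non-edges of $G^k_n$ are precisely the $(n-k-g)(g+1)$ pairs with one endpoint in $K_{n-k-g}$ and the other in $K_{g+1}$, so $|E(G^k_n)|={n\choose 2}-(n-k-g)(g+1)$, while Lemma \ref{lem5-2} gives $\kappa^g(G^k_n)=k-1<k$. Thus $f(n,k)$ must strictly exceed this edge count.

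For the upper bound $f(n,k)\le {n\choose 2}-(n-k-g)(g+1)+1$ I would argue by contrapositive. Assume $\kappa^g(G)\le k-1$ and choose a $g$-good-neighbor cut $X$ with $|X|=s\le k-1$. Let $C_1,\ldots,C_r$ ($r\ge 2$) be the components of $G-X$ and set $a_i=|V(C_i)|$; the $g$-good-neighbor condition forces $a_i\ge g+1$, and $\sum_i a_i=n-s$. Since every edge joining two distinct components is absent from $G$,
\[
|E(G)|\le {n\choose 2}-\sum_{1\le i<j\le r}a_ia_j.
\]

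The main task, and the one step I expect to require care, is the inequality $\sum_{i<j}a_ia_j\ge (g+1)(n-k-g)$. Through the identity $2\sum_{i<j}a_ia_j=(\sum a_i)^2-\sum a_i^2$, this amounts to an upper bound on $\sum a_i^2$ with $\sum a_i=n-s$ fixed and each $a_i\ge g+1$. A standard shifting argument (moving a unit from a smaller part to a larger part strictly increases $\sum a_i^2$) pushes the extremum to the most imbalanced configuration; comparing across values of $r$ then shows the optimum occurs at $r=2$ with $a_1=g+1$ and $a_2=n-s-g-1$, which is feasible since $n-s-g-1\ge n-k-g\ge g+2$ under the hypothesis $k\le n-2g-2$. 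This yields $\sum_{i<j}a_ia_j\ge (g+1)(n-s-g-1)\ge (g+1)(n-k-g)$ using $s\le k-1$, hence $|E(G)|\le {n\choose 2}-(n-k-g)(g+1)$, completing the contrapositive.
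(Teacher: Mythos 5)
Your proposal is correct and follows essentially the same route as the paper: the lower bound via the extremal graph $G^k_n$ of Lemma \ref{lem5-2}, and the upper bound by counting the cross-component non-edges of a $g$-good-neighbor cut and minimizing the resulting product. The paper streamlines the optimization step by bounding only the non-edges between $C_1$ and the union of the remaining components (a single product $a_1(n-a_1-|X|)$ of two factors each at least $g+1$ summing to at least $n-k+1$), whereas you minimize $\sum_{i<j}a_ia_j$ over all configurations, but this is only a minor variation of the same argument.
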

\begin{proof}
To show $f(n,k)\geq {n\choose 2}-(n-k-g)(g+1)+1$, we construct
$G^k_n$ defined in Lemma \ref{lem5-2}. Then $\kappa^g(G^k_n)=k-1$.
Since $|E(G^k_n)|={n\choose 2}-(n-k-g)(g+1)$, it follows that
$f(n,k)\geq {n\choose 2}-(n-k-g)(g+1)+1$.

Let $G$ be a graph with $n$ vertices such that $|E(G)|\geq {n\choose
2}-(n-k-g)(g+1)+1$. We claim that $\kappa^g(G)\geq k$. Assume, to
the contrary, that $\kappa^g(G)\leq k-1$. Then there exists a vertex
set $X\subseteq V(G)$ and $|X|\leq k-1$ such that the minimum degree
of each component of $G-X$ is at least $g$. Let $C_1,C_2,\ldots,C_r$
be the components of $G-X$. The number of edges from $C_1$ to
$C_2\cup C_3\cup \ldots \cup C_r$ in $\overline{G}$ is at least
$|V(C_1)|(n-|V(C_1)|-|X|)\geq (n-k-g)(g+1)$ since
$\sum_{i=1}^r|V(C_i)|\geq n-k+1$ and $|V(C_i)|\geq g+1$ for each $i
\ (1\leq i\leq r)$. Clearly, $|E(G)|\leq {n\choose 2}-(n-k-g)(g+1)$,
which contradicts to $|E(G)|\geq {n\choose 2}-(n-k-g)(g+1)+1$. So
$\kappa^g(G)\geq k$, and hence $f(n,k)\leq {n\choose
2}-(n-k-g)(g+1)+1$.

From the above argument, we have $f(n,k)={n\choose
2}-(n-k-g)(g+1)+1$.
\end{proof}

Note that $g(n,k)=s(n,k+1)-1$. So we have the following.
\begin{proposition}\label{pro5-2}
Let $n,g,k$ be three integers with $2\leq g\leq \left\lfloor
\frac{n-k-2}{2}\right\rfloor$ and $1\leq k\leq n-2g-2$.

$(1)$ If $n-k$ is odd and $g\geq 2$, then
$g(n,k)=\frac{(n-k-1)g}{2}+k+1$.

$(2)$ If $n-k$ is even and $g\geq 2$, then
$g(n,k)=\frac{(n-k-1)g+1}{2}+k+1$.
\end{proposition}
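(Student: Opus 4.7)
The plan is to derive Proposition \ref{pro5-2} as an immediate corollary of Theorem \ref{pro5-1} through the identity $g(n,k) = s(n,k+1) - 1$ that the paper records just before the statement, so most of the work goes into unpacking that identity cleanly; the rest is substitution.

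First I would justify the identity. Unpacking the definitions, $g(n,k)+1$ is the smallest number of edges in an $n$-vertex graph $G$ whose $g$-good-neighbor connectivity satisfies $\kappa^g(G)\geq k+1$, while $s(n,k+1)$ is the smallest edge count among graphs with $\kappa^g(G)$ equal to $k+1$. These two quantities coincide because the minimum in the former is attained at some graph with $\kappa^g=k+1$: the explicit extremal constructions $F^{k+1}_n$ and $H^{k+1}_n$ furnished by Lemmas \ref{lem4-1} and \ref{lem4-2} in Section~4 already realize the value $k+1$ with exactly $s(n,k+1)$ edges, and any graph with $\kappa^g\geq k+2$ is also a legal witness with at least as many edges. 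Hence $g(n,k)+1 = s(n,k+1)$.

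Next I would apply Theorem \ref{pro5-1} with parameter $k$ replaced by $k+1$ and split on the parity of $n-(k+1)=n-k-1$. If $n-k$ is odd then $n-k-1$ is even, so part~(2) of Theorem \ref{pro5-1} yields
\[
s(n,k+1) \;=\; \frac{(n-k-1)g}{2} + (k+1) + 1,
\]
and subtracting $1$ gives the formula in~(1). Symmetrically, if $n-k$ is even then $n-k-1$ is odd, so part~(3) of Theorem \ref{pro5-1} yields
\[
s(n,k+1) \;=\; \frac{(n-k-1)g+1}{2} + (k+1) + 1,
\]
and subtracting $1$ gives the formula in~(2).

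The only non-routine step is the identity $g(n,k) = s(n,k+1)-1$, since a priori a graph with $\kappa^g(G)\geq k+2$ could have strictly fewer edges than any graph with $\kappa^g(G)=k+1$. The extremal graphs from Section~4 rule this out by exhibiting a witness at the value $k+1$ with exactly $s(n,k+1)$ edges, after which the two minima match and the arithmetic does the rest. Once the identity is in hand, the proposition is a single-line substitution into Theorem \ref{pro5-1}.
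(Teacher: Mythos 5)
Your route is exactly the paper's: the paper disposes of this proposition with the single remark that $g(n,k)=s(n,k+1)-1$ and then substitutes $k+1$ for $k$ in Theorem \ref{pro5-1}, splitting on the parity of $n-k-1$; your bookkeeping and arithmetic agree with that. The difference is that you try to justify the identity, and the justification is where the argument breaks.

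The step that fails is the clause ``any graph with $\kappa^g\geq k+2$ is also a legal witness with at least as many edges.'' Unwinding the definitions, $g(n,k)+1=\min\{|E(G)|:\,|V(G)|=n,\ \kappa^g(G)\geq k+1\}=\min_{j\geq k+1}s(n,j)$, so your clause amounts to the monotonicity claim $s(n,j)\geq s(n,k+1)$ for all $j\geq k+2$. But Theorem \ref{pro5-1} itself shows this is false for $g\geq 3$: for $n-k$ even, $s(n,k)=\frac{(n-k)g}{2}+k+1$ is strictly decreasing in $k$ once $g>2$. Concretely, take $g=3$ and $n=20$: then $s(20,2)=30$ while $s(20,4)=29$, so the graph $F^{4}_{20}$ (the construction with $\kappa^g(F^k_n)=k$) has $29$ edges and $\kappa^3=4>1$, which forces $g(20,1)\leq 28<29=s(20,2)-1$. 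Hence the identity $g(n,k)=s(n,k+1)-1$ is wrong in general; the correct statement is $g(n,k)=\min_{j\geq k+1}s(n,j)-1$, and for $g\geq 3$ that minimum is attained at the largest admissible $j$ (up to $n-2g-2$), not at $j=k+1$. This defect is inherited from the paper's own unproved remark, so you have reproduced rather than introduced it, but the proof cannot be repaired as written: you would either have to restrict to $g=2$ (where $s(n,\cdot)$ is essentially constant in $k$) or recompute $g(n,k)$ as the minimum of $s(n,j)$ over all $j\geq k+1$, which yields a different formula from the one asserted.
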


\section{Concluding Remark}

In this paper, we focus our attention on the $g$-good neighbor
connectivity of general graphs. We have proved that $1\leq
\kappa^g(G)\leq n-2g-2$ for $0\leq g\leq
\min\{\Delta(G),\left\lfloor \frac{n-3}{2}\right\rfloor\}$. Trees
with $\kappa^g(T_n)=n-t$ are characterized in this paper. But the
graphs with $\kappa^g(G)=n-t$ is still unknown. From Proposition
\ref{pro3-1}, the classical $\kappa(G)$ is a natural lower bound of
$\kappa^g(G)$, but there is no upper bound of $\kappa^g(G)$ in terms
of $\kappa(G)$.



\begin{thebibliography}{1}

\bibitem{CaiLiWu}
Q. Cai, X. Li, D. Wu, Some extremal results on the colorful
monochromatic vertex-connectivity of a graph, \emph{J. Combin.
Optim.} 35(4) (2018), 1300--1311.


\bibitem{ChangHsieh}
N.-W. Chang and S.-Y. Hsieh, Conditional diagnosability of
$(n,k)$-star graphs under the PMC model, \emph{IEEE Trans. Depend.
Secure Comput.} 15(2) (2018), 207--216.

\bibitem{ChangLinHsieh}
N.-W. Chang, T.-Y. Lin, and S.-Y. Hsieh, Conditional diagnosability
of $k$-ary $n$-cubes under the PMC model, \emph{ACM Trans. Des.
Automat. Electr. Syst.} 17(4) (2012), 46.


\bibitem{ChengQiuShen}
E. Cheng, K. Qiu, Z. Shen, A general approach to deriving the
$g$-good-neighbor conditional diagnosability of interconnection
networks, \emph{Theor. Comput. Sci.} 757 (2019), 56--67.

\bibitem{DahburaMasson}
A.T. Dahbura, G.M. Masson, An $O(n^{2.5})$ faulty identification
algorithm for diagnosable systems, \emph{IEEE Trans. Comput.} 33(6)
(1984), 486--492.


\bibitem{Esfahanian}
A.H. Esfahanian, Generalized measures of fault tolerance with
application to $n$-cube networks, \emph{IEEE Trans. Comput.} 38(11)
(1989), 1586--1591.

\bibitem{EsfahanianHakimi}
A.H. Esfahanian and S.L. Hakimi, On computing a conditional
edge-connectivity of a graph, \emph{Inform. Process. Lett.} 27(4)
(1988), 195--199.

\bibitem{FabregaFiol1}
J. F\`{a}brega and M.A. Fiol, Extra connectivity of graphs with
large girth, \emph{Discrete Math.} 127 (1994), 163--170.

\bibitem{FabregaFiol2}
J. F\`{a}brega and M.A. Fiol, On the extra connectivity of graphs,
\emph{Discrete Math.} 155(1996), 49--57.


\bibitem{GuoLu}
J. Guo, M. Lu, Conditional diagnosability of the round matching
composition networks, \emph{Theor. Comput. Sci.} 657 (2017),
163--172.

\bibitem{GuoLu2}
J. Guo, M. Lu, Conditional diagnosability of the $SP_n$ graphs under
the comparison diagnosis model, \emph{Appl. Math. Comput.} 336
(2018), 249--256.

\bibitem{JiangLiZhang}
H. Jiang, X. Li, Y. Zhang, Erdos-Gallai-type results for total
monochromatic connection of graphs, \emph{Discuss. Math. Graph
Theory}, in press.



\bibitem{LaiTanChangHsu}
P.-L. Lai, J.J.M. Tan, C.-P. Chang, and L.-H. Hsu, Conditional
diagnosability measure for large multiprocessors systems, \emph{IEEE
Trans. Comput.} 54(2) (2005), 165--175.

\bibitem{LatifiHegdeNaraghi-Pour}
S. Latifi, M. Hegde, and M. Naraghi-Pour, Conditional connectivity
measures for large multiprocessor systems, \emph{IEEE Trans.
Comput.} 43(2) (1994), 218--222.

\bibitem{Leighton}
F.T. Leighton, \emph{Introduction to Parallel Algorithms and
Architecture: Arrays, Trees, Hypercubes}, Morgan Kaufmann 1992.

\bibitem{LiLu}
D. Li, M. Lu, The $g$-good-neighbor conditional diagnosability of
star graphs under the PMC and $MM^*$ model, \emph{Theor. Comput.
Sci.} 674 (2017), 53--59.


\bibitem{LiXu}
X. Li, J. Xu, Fault-tolerance of $(n,k)$-star networks, \emph{Appl.
Math. Comput.} 248 (2014), 525--530.





\bibitem{LinXuWangZhou}
L. Lin, L. Xu, D. Wang, S. Zhou, The $g$-good-neighbor conditional
diagnosability of arrangement graphs, \emph{IEEE Trans. Depend.
Secure Comput.} 15(3) (2018), 542--548.

\bibitem{LinXuZhou}
L. Lin, L. Xu, and S. Zhou, Conditional diagnosability and strong
diagnosability of shuffle-cubes under the comparison model,
\emph{Inter. J. Comput. Math.} 92(2) (2015), 230--249.

\bibitem{LinZhouXuWang}
L. Lin, S. Zhou, L. Xu, and D. Wang, The extra connectivity and
conditional diagnosability of alternating group networks, \emph{IEEE
Trans. on Parallel Distr. Syst.} 26(8) (2015), 2352--2362.

\bibitem{MaengMalek}
J. Maeng and M. Malek, A comparison connection assignment for
self-diagnosis of multiprocessors systems, in: \emph{Proceedings of
the 11th International Symposium on Fault-Tolerant Computing}, pp.
173--175, 1981.

\bibitem{PengLinTanHsu}
S.-L. Peng, C.-K. Lin, J. J.M. Tan, L.-H. Hsu, The $g$-good-neighbor
conditional diagnosability of hypercube under PMC model, \emph{Appl.
Math. Comput.} 218 (2012), 10406--10412.

\bibitem{PreparataMetzeChien}
F.P. Preparata, G. Metze, and R.T. Chien, On the connection
assignment problem of diagnosable systems, \emph{IEEE Trans.
Comput.} EC-16 (1967), 848--854.

\bibitem{RenWang}
Y. Ren, S. Wang, The $g$-good-neighbor diagnosability of locally
twisted cubes, \emph{Theor. Comput. Sci.} 697 (2017), 91--97.

\bibitem{Schiermeyer}
Ingo Schiermeyer, On minimally rainbow $k$-connected graphs.
\emph{Discrete Appl. Math.} 161(4-5) (2013), 702--705.


\bibitem{SenguptaDahbura}
A. Sengupta and A. Dahbura, On self-diagnosable multiprocessor
systems: diagnosis by the comparison approach, \emph{IEEE Trans.
Comput.} 41(11) (1992), 1386--1396.

\bibitem{SongLiZhouChen}
S. Song, X. Li, S. Zhou, M. Chen, Fault tolerance and diagnosability
of burnt pancake networks under the comparison model, \emph{Theor.
Comput. Sci.} 582 (2015), 48--59.


\bibitem{WangWang}
S. Wang, M. Wang, The $g$-good-neighbor and $g$-extra diagnosability
of networks, \emph{Theor. Comput. Sci.}, in press.

\bibitem{WangWangWang}
S. Wang, Z. Wang, M. Wang, The $2$-good-neighbor connectivity and
$2$-good-neighbor diagnosability of bubble-sort star graph networks,
\emph{Discrete Appl. Math.} 217 (2017), 691--706.

\bibitem{WangMaoHsieh}
Z. Wang, Y. Mao, S.-Y. Hsieh, On the $g$-extra connectivity of
graphs, arXiv:1904.06527

\bibitem{WeiXu}
Y. Wei, M. Xu, On $g$-good-neighbor conditional diagnosability of
$(n,k)$-star networks, \emph{Theor. Comput. Sci.} 697 (2017),
79--90.

\bibitem{WeiXu2}
Y. Wei, M. Xu, The $1, 2$-good-neighbor conditional diagnosabilities
of regular graphs, \emph{Appl. Math. Comput.} 334 (2018), 295--310.

\bibitem{WeiXu}
Y. Wei, M. Xu, The $g$-good-neighbor conditional diagnosability of
locally twisted cubes, \emph{J. Oper. Res. Soc. China} 6(2) (2018),
333--347.

\bibitem{WuGuo}
J. Wu and G. Guo, Fault tolerance measures for $m$-ary
$n$-dimensional hypercubes based on forbidden faulty sets,
\emph{IEEE Trans. Comput.} 47(8) (1998), 888--893.

\bibitem{Xu}
J. Xu, \emph{Topological structure and analysis of interconnection
networks}, Kluwer Academic Publishers, 2001.

\bibitem{XuLiZhouHaoGu}
X. Xu, X. Li, S. Zhou, R. Hao, M. Gu, The $g$-good-neighbor
diagnosability of $(n,k)$-star graphs, \emph{Theor. Comput. Sci.}
659 (2017), 53--63.

\bibitem{XuWangWang}
J. Xu, J. Wang, and W. Wang, On super and restricted connectivity of
some interconnection networks, \emph{Ars Combin.} 94 (2010), 1--8.

\bibitem{YuanLiuQinZhangLi}
J. Yuan, A. Liu, X. Qin, J. Zhang, J. Li, $g$-good-neighbor
conditional diagnosability measures for $3$-ary $n$-cube networks,
\emph{Theor. Comput. Sci.} 626 (2016), 144--162.



\bibitem{YuanLiuMaLiuQinZhang}
J. Yuan, A. Liu, X. Ma, X. Liu, X. Qin, and J. Zhang, The
$g$-good-neighbor conditional diagnosability of $k$-ary $n$-cubes
under the $PMC$ Model and $MM^*$ Model, \emph{IEEE Trans. Parall.
Distr. Syst.} 26(4) (2015), 1165--1177.



\bibitem{ZhouFanZhouWangCheng}
D. Zhou, J. Fan, J. Zhou, Y. Wang, and B. Cheng, Diagnosability of
the exchanged crossed cube, \emph{Inter. J. Comput. Math.: CST} 3(2)
(2018), 53--63.









\end{thebibliography}
\end{document}